\documentclass{journal}
\usepackage{tikz}
\usepackage{color}
\usepackage[all]{xy}
\usepackage{amsmath, amssymb}
\usepackage{amsfonts}
\usepackage{mathrsfs}
\usepackage{amsthm}
\usepackage{bm}

\makeatletter
 \def\th@plain{\upshape}
 \makeatother
\numberwithin{equation}{section}
\newtheorem{theorem}{Theorem}[section]
\newtheorem{prop}[theorem]{Proposition}
\newtheorem{lemma}[theorem]{Lemma}
\newtheorem{cor}[theorem]{Corollary}
\newtheorem{Example}[theorem]{Example}
\newtheorem{definition}[theorem]{Definition}
\newtheorem{rem}[theorem]{Remark}

\title{Quandles over a hyperboloid of one sheet and the longitudinal mapping knot invariant for $SL(2,\mathbb{R})$}
\author{
Kentaro Yonemura
\thanks{
the 2010 MSC: \texttt{57K10, 57K12}, E-mail: \texttt{3MA20009Y@s.kyushu-u.ac.jp}}
}
\date{}
\begin{document}

\maketitle
\begin{abstract}
   This paper aims to consider algebraic structures of quandles defined over a hyperboloid of one sheet and compute the related longitudinal mapping for $SL(2,\mathbb{R})$.
\end{abstract}

\begin{keywords}
quandles, the longitudinal mapping, a hyperboloid of one sheet
\end{keywords}

\section{Introduction}
A quandle is an algebraic system defined by Joyce \cite{Joyce1982} and Matveev \cite{Matveev1982} independently. Joyce and Matveev was motivated by knot theory and  constructed the almost complete knot invariant called the knot quandles or the fundamental quandles of knots. In this paper, we deal with a smooth quandle, which is a differentiable manifold with a smooth quandle-operation, defined over a hyperboloid of one sheet. See Ishikawa \cite{Ishikawa} and Nosaka \cite{Nosaka2019}
for more details on a smooth quandle.

We deal with two problems in this paper.

First, we see that subquandles of conjugacy quandles are generally different from the quandle composed by the Azcan-Fenn \cite{AzcanFenn1994}, even if both of them have the same topological structure. We consider this issue to show that a fact is a special case: Azcan-Fenn \cite{AzcanFenn1994} defined the spherical quandle using Euclidean inner product. On the other hand, Clark-Saito \cite{ClarkSaito} defined a family of quandles on conjugacy classes of $SU(2)$ and called them spherical quandles. The auther \cite{Yonemura} proved that the two types of spherical quandles are compatible, that is, the spherical quandle defined by Azcan-Fenn is isomorphic to one of the spherical quandle defined by Clark-Saito. 

They defined a knot invariant called a longitudinal mapping and calculate it in the case of $SU(2)$ using the quandle.

Secondly, we determine the value of the longitudinal mapping $\mathcal{L}_G^x(K)$ under limited condition: When $G$ is $SL(2,\mathbb{R})$, $K$ is a $(2,n)$-torus knot, and $x$ is conjugate with 
\[
\begin{pmatrix}
e^{r} & 0\\
0 & e^{-r}
\end{pmatrix}\in SL(2,\mathbb{R}).
\]
The longitudinal mapping is a knot invariant defined by Clark-Saito \cite{ClarkSaito}. Clark and Saito explained that their invariant is a extention of the knot colouring polynomial \cite{Eisermann2007}, which is a generalisation of the quandle cocycle invariant \cite{CJKLS}. Our approach is a little different from Clark-Saito \cite{ClarkSaito}. Clark-Saito \cite{ClarkSaito} presented the elemtnts of $SU(2)$ as unitquaternions and used Python to determine the value of the longitudinal mapping for $SU(2)$. On the other hand, we present the elements of $SL(2,\mathbb{R})$ as matrices and use linear algebra to determine the value of the longitudinal mapping for $SL(2,\mathbb{R})$. Our approach may be a useful example when considering the value of the longitudinal mapping for highter dimentional Lie groups.



This paper is organized as follows. In section \ref{section_preliminaries}, the basic notation and facts on quandles and $SL(2,\mathbb{R})$ are presented. In section \ref{section_quandles_over_hyperboloid}, we discuss on the algebraic structure of quandles over a hyperboloid of one sheet. In section \ref{section_determine_coloring}, we determine $X$-colorings with respect to a diagram of $(2,n)$-torus knots for the case $X$ is a subquandle of conjugacy quandles. In the section \ref{section_determine_S^2_1(r)-coloring}, we apply the argument considered in section \ref{section_determine_coloring}. In the section \ref{section_introduction_to_NSK_1to1correspondence}, we introduce a result of Nosaka \cite{Nosaka2015}. In the section \ref{section_S^2_1(r)-coloring_vs_hyperbolic_rep},we apply the result introduced in section \ref{section_introduction_to_NSK_1to1correspondence}. In section \ref{section_computing_longitudinal_map}, we determine the value of the longitudinal mapping for $SL(2,\mathbb{R})$.

\section{Preliminaries}
\label{section_preliminaries}
We recall definitions and facts using in this paper without proofs.

\subsection{Quandle}
We see the definition of a quandle and some basic facts. See Kamada \cite{Kamadabook} and Nosaka \cite{Nosaka2017book} for more details.
\begin{definition}[Joyce \cite{Joyce1982}, Matveev \cite{Matveev1982}]\label{def_quandle}
A \textit{quandle} is a set $X$ with a binary operation $\triangleright:X\times X\to X$ satisfying the three conditions:

\noindent(Q1) $x\triangleright x = x$ for any $x\in X$.

\noindent(Q2) The map $S_y:X\to X$ defined by $x\mapsto x\triangleright y$ is bijective for any $y\in X$.

\noindent(Q3) $(x\triangleright y)\triangleright z = (x\triangleright z)\triangleright(y\triangleright z)$ for any $x,y,z\in X$.
\end{definition}

A subset $Y$ of quandle $X$ is said to be a \textit{subquandle} if the quandle operation of $X$ is closed in $Y$. A map $f:X\to Y$ between quandles is said to be a \textit{quandle homomorphism} if $f(x\triangleright y)=f(x)\triangleright f(y)$ for any $x,y\in X$. A quandle homomorphism is said to be a \textit{quandle isomorphism} if it is bijective.

We see examples of quandles.

\begin{Example}
Suppose $X$ is a set and $x\triangleright y=x$ for any $x,y\in X$. Then $(X,\triangleright)$ is a quandle called a \textit{trivial quandle}.
\end{Example}

\begin{Example}
Suppose $X=\mathbb{Z}/n\mathbb{Z}$ and $x\triangleright y=2y-x$ for any $x,y\in X$. Then $(X,\triangleright)$ is a quandle called a \textit{dihedral quandle} \cite{Takasaki1943}.
\end{Example}

\begin{Example}
\label{conjugation_quandle}
Let $G$ be a group and $\triangleright$ be a binary operation of $G$ defined by $g\triangleright h=h^{-1}gh$ for any $g,h\in G$. Then $(G,\triangleright)$ is a quandle called \textit{conjugacy quandle}. We denote this quandle $\operatorname{Conj}(G)$.
\end{Example}

\begin{Example}
\label{def_augmented_quandle}
Let $G$ be a group, $X$ a set on which $G$ acts from the right, and  $\kappa:X\to G$ a map satisfying the two conditions:
\begin{enumerate}
    \item $\kappa(x\cdot g)=g^{-1}\kappa(x)g$ for any $x\in X$ and $g\in G$.
    \item $x\cdot \kappa(x)=x$ for any $x\in X$.
\end{enumerate}
Suppose $x\triangleright y=x\cdot\kappa(y)$ for any $x,y\in X$. Then $(X,\triangleright)$ is a quandle called an \textit{augmented quandle} \cite{Joyce1982}. We denote this quandle by $(X,G,\kappa)$ or simply $X$. The quandle $X$ is said to be \textit{faithful} if the map $\kappa$ is injective.
\end{Example}

\begin{Example}
\label{example_presentation_as_augmented_quandle}
Suppose $G$ be a group, $X$ be a subquandle of $\operatorname{Conj}(G)$, and $i_X:X\to G$ be an inclusion map. Then $G$ acts on $X$ via conjugation and $(X,G,i_X)$ is a faithfull augumented quandle. The identity map $\operatorname{id}_X:X\to X$ induces a quandle isomorphism from $X$ to $(X,G,i_X)$.
\end{Example}

\begin{Example}
\label{knot quandle_def}
Let $K$ be a tame knot in the 3-sphere, $\pi_K=\pi_1(S^3\setminus K)$ the knot group of $K$, and $H=\langle\mathfrak{m},\mathfrak{l\rangle}$ the subgroup of $\pi_K$ generated by a meridian $\mathfrak{m}\in\pi_K$ and the preferred longitude $\mathfrak{l}\in \pi_K$ of $K$. Suppose $X=H\backslash\pi_K$ and $Hx\triangleright Hy=H\mathfrak{m}xy^{-1}\mathfrak{m}y$ for $x,y\in\pi_K$. Then, the algebraic system $(X,\triangleright)$ is a quandle and called the \textit{knot quandle} of $K$ or the \textit{fundamental quandle} of $K$ \cite{Joyce1982,Matveev1982}. We denote this quandle by $Q_K$. 
\end{Example}
The knot quandle, we defined in Example \ref{knot quandle_def}, is a complete knot invariant. See \cite{Joyce1982, Matveev1982,Kamadabook, Nosaka2017book} for more details.

A quandle is said to be an \textit{involutory quandle} or a \textit{kei} if
$
(x\triangleright y)\triangleright y=x
$ for any $x,y$. A dihedral quandle is an involutory quandle for example.

We introduce an $X$-coloring and provide some examples.

\begin{definition}[$X$-colorings]
Let $X$ be a quandle, and $D$ be an oriented knot diagram. An $X$\textit{-coloring} of $D$ is a map $C:\{\mbox{arcs of }D\}\to X$ satisfying the condition $C(\alpha_{\tau})\triangleright C(\beta_{\tau})=C(\gamma_{\tau})$ in Fig. \ref{pic_coloring_condition} at each crossing $\tau$ of $D$. We denote the set of $X$-colorings of $D$ by $\operatorname{Col}_X(D)$.
\end{definition}

For a knot $K$ with a diagram $D$ and a quandle $X$, it is known that an $X$-coloring of $D$ induces a quandle homomorphism from $Q_K$ to $X$.

\begin{figure}
  \centering
  \includegraphics[width=7cm]{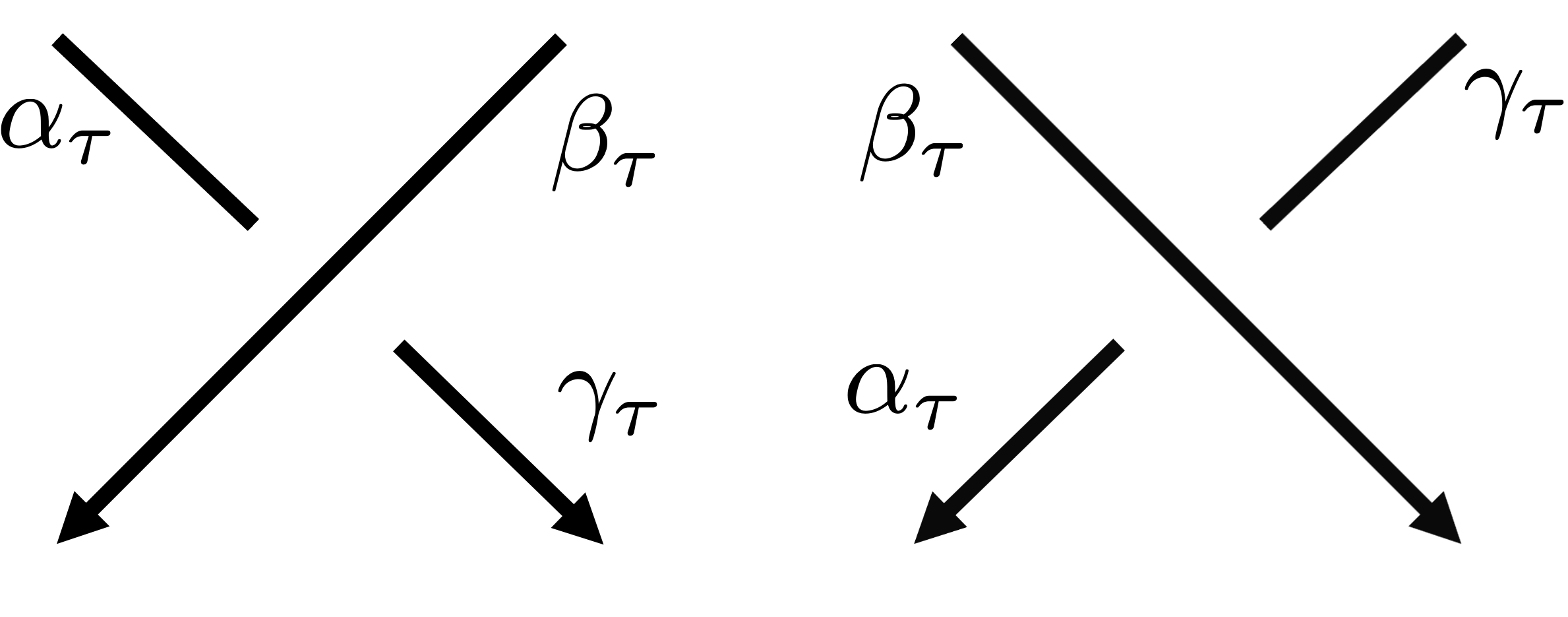}
  \caption{Crossings $\tau$ of a knot diagram $D$}
  \label{pic_coloring_condition}
\end{figure}

We see some special quandle colorings.

\begin{Example}\label{example_trivial-hom}
Suppose $D$ be an oriented knot diagram and $Y=\{*\}$ be a trivial quandle. Then, any map $C:\{\mbox{arcs of }D\}\to Y$ is a $Y$-colorings.
\end{Example}

\begin{definition}\label{def_trivial-hom}
In this paper, an $X$-coloring $C$ is said to be trivial if the image of $C$ is a trivial subquandle of $X$.
\end{definition}

\begin{rem}
Some literature defines a trivial coloring as a coloring  considered in Example \ref{example_trivial-hom}.
\end{rem}

\subsection{Properties of $SL(2,\mathbb{R})$}\label{section_property_SL2R}
Consider the Lie group
\[
SL(2,\mathbb{R})=
\left\{
\begin{pmatrix}
a && b\\
c && d
\end{pmatrix}
\ :\ {}
a,b,c,d\in\mathbb{R},\ ad-bc=1
\right\}.
\]
The Lie group $SL(2,\mathbb{R})$ acts on itself via conjugation. Let $S^2_1(r)$ be a conjugacy class of $SL(2,\mathbb{R})$
\[
S^2_1(r)=
\left\{
g^{-1}
\begin{pmatrix}
e^r && 0\\
0 && e^{-r}
\end{pmatrix}
g
\in{SL}(2,\mathbb{R})
\ :\ {}
g
\in{SL}(2,\mathbb{R})
\right\}
\]
for $r>0$. The conjugacy class $S^2_1(r)$ is diffeomorfic to a hyperboloid of one  sheet. We recall the following known facts.
\begin{prop}\label{prop_S2r_SL2R-orbit}
For any $r>0$, $S^2_1(r)$ is an $SL(2,\mathbb{R})$-orbit.
\end{prop}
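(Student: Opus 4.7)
The plan is to observe that the statement is essentially a restatement of the definition. By the formula displayed just above, $S^2_1(r)$ is the set of all conjugates $g^{-1} A_r g$ of the diagonal matrix $A_r = \begin{pmatrix} e^r & 0 \\ 0 & e^{-r}\end{pmatrix}$ as $g$ ranges over $SL(2,\mathbb{R})$; and the rule $(g,h) \mapsto g^{-1} h g$ is readily checked to define an action of $SL(2,\mathbb{R})$ on itself, using associativity and $(gh)^{-1} = h^{-1} g^{-1}$. Hence $S^2_1(r)$ is, by construction, the orbit of $A_r$ under this conjugation action, and the proposition follows immediately.

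If instead one reads the proposition as the slightly stronger claim that $S^2_1(r)$ coincides with the trace level set $\{M \in SL(2,\mathbb{R}) : \operatorname{tr}(M) = 2\cosh r\}$ --- that is, every matrix in $SL(2,\mathbb{R})$ with eigenvalues $e^{\pm r}$ is conjugate to $A_r$ \emph{inside} $SL(2,\mathbb{R})$, not merely inside $GL(2,\mathbb{R})$ --- then I would argue as follows. Fix such an $M$. Since $r > 0$, the eigenvalues $e^r$ and $e^{-r}$ are distinct positive real numbers, so $M$ admits real eigenvectors $v_+, v_-$, and the matrix $P = [\,v_+ \mid v_-\,] \in GL(2,\mathbb{R})$ satisfies $P^{-1} M P = A_r$. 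Finally, rescaling $v_+$ by a real scalar $\lambda$ with $\lambda \det P = 1$ yields a new $P \in SL(2,\mathbb{R})$ which still conjugates $M$ to $A_r$; such a $\lambda$ exists because $\det P$ is a nonzero real number.

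The only mild obstacle is this determinant rescaling step, where one must confirm that the scaling factor can be taken real (rather than only complex); this is immediate from the reality of the eigenvectors when $r>0$, so I do not expect any genuine difficulty in either reading of the proposition.
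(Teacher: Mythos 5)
Your proposal is correct, but note that the paper itself offers no proof to compare against: Proposition \ref{prop_S2r_SL2R-orbit} sits in the preliminaries under ``We recall the following known facts,'' and Section \ref{section_preliminaries} announces that facts are recalled without proofs. Your first reading is the right one for the statement as literally posed, and it is indeed essentially tautological: $S^2_1(r)$ is \emph{defined} as the set of conjugates $g^{-1}\operatorname{diag}(e^r,e^{-r})\,g$ with $g\in SL(2,\mathbb{R})$, so once one verifies that $(h,g)\mapsto g^{-1}hg$ is an action of $SL(2,\mathbb{R})$ on itself, the set is by construction a single orbit. Your second, stronger reading --- that this orbit coincides with the trace level set $\{M\in SL(2,\mathbb{R}) : \operatorname{trace}M=2\cosh r\}$ --- is not this proposition but the next one, Proposition \ref{prop_tr_expS2r}, which the paper likewise records without proof; your argument for it is nevertheless correct and complete. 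Since $\det M=1$ and $\operatorname{trace}M=2\cosh r$, the characteristic polynomial is $t^2-2\cosh(r)\,t+1$, whose roots $e^{\pm r}$ are real and distinct for $r>0$, so $M$ is diagonalized by a real matrix $P$ of eigenvectors; rescaling one eigenvector by the real nonzero scalar $1/\det P$ normalizes $\det P$ to $1$ without disturbing the eigenvector property or the order of the diagonal entries. That determinant-normalization step is precisely where $SL(2,\mathbb{R})$-conjugacy could a priori differ from $GL(2,\mathbb{R})$-conjugacy (as it does, e.g., for elliptic conjugacy classes, where rotations by $\pm\theta$ are $GL$- but not $SL$-conjugate), and you identify and resolve it correctly: in the hyperbolic case at hand the eigenvectors are real, so the scaling factor can be taken real and no obstruction arises.
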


\begin{prop}
\label{prop_tr_expS2r}
For any $r>0$,
\[
S^2_1(r)
=\{g\in SL(2,\mathbb{R})\ :\ {}\operatorname{trace}g=2\cosh{r}\}.
\]
\end{prop}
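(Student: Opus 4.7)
The plan is to prove the two inclusions separately, using the characteristic polynomial to handle the non-trivial direction.

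For the inclusion $S^2_1(r)\subseteq\{g\in SL(2,\mathbb{R}):\operatorname{trace}g=2\cosh r\}$, I would just observe that the trace is a class function: for any $g\in SL(2,\mathbb{R})$, $\operatorname{trace}\bigl(g^{-1}\operatorname{diag}(e^r,e^{-r})g\bigr)=\operatorname{trace}\operatorname{diag}(e^r,e^{-r})=e^r+e^{-r}=2\cosh r$.

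For the reverse inclusion, given $g\in SL(2,\mathbb{R})$ with $\operatorname{trace}g=2\cosh r$, the characteristic polynomial is $\lambda^2-2\cosh(r)\lambda+1$, whose roots are $\cosh r\pm\sinh r=e^{\pm r}$. Because $r>0$ these eigenvalues are distinct and real, so $g$ is $\mathbb{R}$-diagonalizable: there exists $h\in GL(2,\mathbb{R})$ with $h^{-1}gh=\operatorname{diag}(e^r,e^{-r})$.

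The key remaining point — and the only mildly subtle step — is to replace $h$ by an element of $SL(2,\mathbb{R})$. Since conjugation is invariant under scaling, I may assume $\det h=\pm 1$; if $\det h=-1$, set $h':=h\cdot\operatorname{diag}(1,-1)$, so $\det h'=1$, and a direct check gives
\[
(h')^{-1}g h'=\operatorname{diag}(1,-1)\operatorname{diag}(e^r,e^{-r})\operatorname{diag}(1,-1)=\operatorname{diag}(e^r,e^{-r}),
\]
because $\operatorname{diag}(1,-1)$ commutes with any diagonal matrix. Thus $g\in S^2_1(r)$. (Alternatively, one could simply invoke Proposition \ref{prop_S2r_SL2R-orbit} to conclude that the $SL(2,\mathbb{R})$-orbit and $GL(2,\mathbb{R})$-orbit coincide on $S^2_1(r)$.) Combining the two inclusions yields the stated equality.
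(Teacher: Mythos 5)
Your proof is correct, but note that the paper itself offers no proof here: Proposition \ref{prop_tr_expS2r} is introduced with ``We recall the following known facts'' and stated without argument, so your write-up fills a gap rather than paralleling anything. Your route is the standard one and is complete: trace invariance gives the easy inclusion, and for the converse the characteristic polynomial $\lambda^2-2\cosh(r)\lambda+1$ has distinct real roots $e^{\pm r}$ (here $r>0$ is essential, exactly as in the hypothesis), so $g$ is diagonalizable by some $h\in GL(2,\mathbb{R})$, and your determinant repair --- scaling $h$ to $\det h=\pm1$ (possible since scaling changes the determinant by a positive factor $\lambda^2$) and then right-multiplying by $\operatorname{diag}(1,-1)$, which commutes with diagonal matrices --- correctly promotes the conjugator to $SL(2,\mathbb{R})$. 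This last step is genuinely needed and is the part most often glossed over, so it is good that you made it explicit. One caveat: your parenthetical alternative does not work as stated, since Proposition \ref{prop_S2r_SL2R-orbit} merely records that $S^2_1(r)$ is an $SL(2,\mathbb{R})$-orbit (essentially restating the definition) and by itself says nothing about the $GL(2,\mathbb{R})$-orbit coinciding with the $SL(2,\mathbb{R})$-orbit; that coincidence is precisely what your explicit $\operatorname{diag}(1,-1)$ computation establishes, so you should keep the main argument and drop the parenthetical.
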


\begin{prop}\label{prop_injectivity_exp}
Let $r$ be positive, and
\[
D(r)=
\begin{pmatrix}
e^r && 0\\
0 && e^{-r}
\end{pmatrix}
\in S^2_1(r).
\] 
Then, both of the isotropy subgroup of $SL(2,\mathbb{R})$ with respect to $D(r)\in SL(2,\mathbb{R})$ is the subgroup consisting of the entire diagonal matrices of $SL(2,\mathbb{R})$.
\end{prop}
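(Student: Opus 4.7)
The plan is a direct computation. The isotropy subgroup of $D(r)$ under the conjugation action coincides with the centralizer $\{g \in SL(2,\mathbb{R}) : gD(r) = D(r)g\}$, so I would parametrize an arbitrary element as $g = \begin{pmatrix} a & b \\ c & d \end{pmatrix}$ with $ad - bc = 1$ and expand both products.

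Comparing the off-diagonal entries of $gD(r)$ and $D(r)g$ yields the two equations $(e^{r} - e^{-r})b = 0$ and $(e^{r} - e^{-r})c = 0$. Because $r > 0$, the factor $2\sinh r$ is nonzero, which forces $b = c = 0$. The determinant condition then collapses to $ad = 1$, so $g$ is a diagonal matrix in $SL(2,\mathbb{R})$.

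Conversely, any diagonal matrix in $SL(2,\mathbb{R})$ visibly commutes with the diagonal matrix $D(r)$, giving the reverse inclusion for free. The argument is entirely elementary and I do not expect any real obstacle; the only step worth flagging is the use of $r > 0$, which is essential for dividing by $e^{r} - e^{-r}$. At $r = 0$ the element $D(0)$ is the identity and its centralizer is all of $SL(2,\mathbb{R})$, which is consistent with this observation and explains why the hypothesis cannot be dropped.
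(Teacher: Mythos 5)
Your computation is correct: for the conjugation action the isotropy subgroup of $D(r)$ is exactly its centralizer, and equating the off-diagonal entries of $gD(r)$ and $D(r)g$ gives $b(e^{r}-e^{-r})=0$ and $c(e^{r}-e^{-r})=0$, so $b=c=0$ since $2\sinh r\neq 0$ for $r>0$, leaving precisely the diagonal subgroup of $SL(2,\mathbb{R})$; the reverse inclusion is immediate. Note that the paper itself offers no proof to compare against --- Proposition \ref{prop_injectivity_exp} is stated in the preliminaries as a recalled known fact --- so your elementary centralizer argument simply supplies the standard verification, and your closing remark about $r=0$ correctly identifies why the hypothesis $r>0$ is indispensable.
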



\section{Quandles over a hyperboloid of one sheet}
\label{section_quandles_over_hyperboloid}
We consider two types of quandles  $S^2_1(r)$ and ${S^2_1}_{\mathbb{R}}$ over a hyperboloid of one sheet. We see that they are not isomorphic though one of the spherical quandle defined in Clark-Saito \cite{ClarkSaito} , which is similar to $S^2_1(r)$, is isomorphic to the spherical quandle defined by Azcan-Fenn \cite{AzcanFenn1994}, which is similar to ${S^2_1}_{\mathbb{R}}$.

First of all, we see a quandle $S^2_1(r)$ and its property.
\begin{prop}
For $r>0$, the conjugacy class $S^2_1(r)$ is a subquandle of $\operatorname{Conj}(SL(2,\mathbb{R}))$.
\end{prop}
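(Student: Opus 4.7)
The plan is to verify only the closure of $S^2_1(r)$ under the conjugation operation, since the quandle axioms (Q1)--(Q3) for $\operatorname{Conj}(SL(2,\mathbb{R}))$ are inherited by any subset closed under $\triangleright$. So I would fix $g,h \in S^2_1(r)$ and show that $h^{-1}gh \in S^2_1(r)$.

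The cleanest route is via Proposition \ref{prop_tr_expS2r}, which characterises $S^2_1(r)$ as the level set $\{g \in SL(2,\mathbb{R}) : \operatorname{trace} g = 2\cosh r\}$. Since trace is invariant under conjugation, for $g \in S^2_1(r)$ and any $h \in SL(2,\mathbb{R})$ (in particular any $h \in S^2_1(r)$) we get $\operatorname{trace}(h^{-1}gh) = \operatorname{trace}(g) = 2\cosh r$, so $h^{-1}gh \in S^2_1(r)$.

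Alternatively, one can argue directly from the definition: if $g \in S^2_1(r)$, then $g = a^{-1} D(r) a$ for some $a \in SL(2,\mathbb{R})$, whence
\[
h^{-1} g h = h^{-1} a^{-1} D(r) a h = (ah)^{-1} D(r) (ah),
\]
and $ah \in SL(2,\mathbb{R})$ exhibits $h^{-1}gh$ as a member of the conjugacy class $S^2_1(r)$. Either route yields the claim in a single line.

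There is no serious obstacle here; the statement is essentially the tautology that any conjugacy class of a group is stable under conjugation by elements of that group. The content of the proposition is really just to fix terminology and single out $S^2_1(r)$ as the specific subquandle of $\operatorname{Conj}(SL(2,\mathbb{R}))$ that will be studied in the rest of Section \ref{section_quandles_over_hyperboloid}.
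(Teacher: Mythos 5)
Your proposal is correct and matches the paper's approach: the paper's one-line proof simply invokes Proposition \ref{prop_S2r_SL2R-orbit} (that $S^2_1(r)$ is an $SL(2,\mathbb{R})$-orbit), which is exactly your second, direct argument that a conjugacy class is stable under conjugation. Your trace-based route via Proposition \ref{prop_tr_expS2r} is an equally valid one-liner, but it is not the one the paper uses.
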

\begin{proof}
It is easy to see the result in light of Proposition \ref{prop_S2r_SL2R-orbit}.
\end{proof}

Secondly, we see the definition of a quandle ${S^2_1}_{\mathbb{R}}$. Let $\langle-,-\rangle:\mathbb{R}^3\times\mathbb{R}^3\to\mathbb{R}$ be a bilinear map defined by
\[
\langle
(x_0,x_1,x_2)
,
(y_0,y_1,y_2)
\rangle
=
-x_0y_0+x_1y_1+x_2y_2,
\]
$S^2_1=\{\bm{x}\in\mathbb{R}^3\ :\ \langle\bm{x},\bm{x}\rangle=1\}$ a hyperboloid of one sheet, and $\triangleright:S^2_1\times S^2_1\to S^2_1$ a binary operation defined by $\bm{x}\triangleright\bm{y}=2\langle\bm{x},\bm{y}\rangle\bm{y}-\bm{x}$ for all $\bm{x},\bm{y}\in S^2_1$.

\begin{prop}[Azcan-Fenn \cite{AzcanFenn1994}]
The algebraic system $(S^2_1,\triangleright)$ is an involutory quandle. We denote this quandle as ${S^2_1}_{\mathbb{R}}$.
\end{prop}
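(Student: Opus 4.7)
The plan is to verify directly the closure of $\triangleright$ on $S^2_1$, then the three quandle axioms (Q1), (Q2), (Q3), together with the involutory identity. The computation depends only on the bilinearity of $\langle-,-\rangle$ and on the normalisation $\langle\bm{x},\bm{x}\rangle=1$ for points on $S^2_1$; crucially, the Lorentzian signature never enters the algebra, so the verification will mirror the Euclidean proof of Azcan--Fenn.

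First I would check closure by expanding
\[
\langle 2\langle\bm{x},\bm{y}\rangle\bm{y}-\bm{x},\,2\langle\bm{x},\bm{y}\rangle\bm{y}-\bm{x}\rangle
=4\langle\bm{x},\bm{y}\rangle^{2}\langle\bm{y},\bm{y}\rangle-4\langle\bm{x},\bm{y}\rangle^{2}+\langle\bm{x},\bm{x}\rangle,
\]
which collapses to $1$ when $\bm{x},\bm{y}\in S^2_1$, so $\bm{x}\triangleright\bm{y}\in S^2_1$. Axiom (Q1) follows at once from $\bm{x}\triangleright\bm{x}=2\langle\bm{x},\bm{x}\rangle\bm{x}-\bm{x}=\bm{x}$. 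For the involutory property the key lemma is the identity $\langle\bm{x}\triangleright\bm{y},\bm{y}\rangle=\langle\bm{x},\bm{y}\rangle$, which is immediate from bilinearity plus $\langle\bm{y},\bm{y}\rangle=1$; substituting this into $(\bm{x}\triangleright\bm{y})\triangleright\bm{y}=2\langle\bm{x}\triangleright\bm{y},\bm{y}\rangle\bm{y}-(\bm{x}\triangleright\bm{y})$ yields $\bm{x}$. Consequently $S_{\bm{y}}\circ S_{\bm{y}}=\mathrm{id}_{S^2_1}$, giving (Q2) and simultaneously the involutory claim.

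The last step is (Q3), which I expect to be the main book\-keeping obstacle. The approach is to expand both sides in terms of $\bm{x},\bm{y},\bm{z}$ and the scalars $\langle\bm{x},\bm{y}\rangle,\langle\bm{x},\bm{z}\rangle,\langle\bm{y},\bm{z}\rangle$. For the right-hand side one needs the auxiliary identity $\langle\bm{x}\triangleright\bm{z},\bm{y}\triangleright\bm{z}\rangle=\langle\bm{x},\bm{y}\rangle$, which again reduces, via $\langle\bm{z},\bm{z}\rangle=1$, to the cancellation $4\langle\bm{x},\bm{z}\rangle\langle\bm{y},\bm{z}\rangle\langle\bm{z},\bm{z}\rangle-4\langle\bm{x},\bm{z}\rangle\langle\bm{y},\bm{z}\rangle=0$. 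After this, both sides simplify to
\[
4\langle\bm{x},\bm{y}\rangle\langle\bm{y},\bm{z}\rangle\bm{z}-2\langle\bm{x},\bm{z}\rangle\bm{z}-2\langle\bm{x},\bm{y}\rangle\bm{y}+\bm{x},
\]
completing the verification. No conceptual difficulty arises; the entire argument is a disciplined expansion in the bilinear form.
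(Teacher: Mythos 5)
Your verification is correct in every step: closure, (Q1), the identity $\langle\bm{x}\triangleright\bm{y},\bm{y}\rangle=\langle\bm{x},\bm{y}\rangle$ giving $S_{\bm{y}}\circ S_{\bm{y}}=\operatorname{id}$ (hence (Q2) and the involutory property at once), the identity $\langle\bm{x}\triangleright\bm{z},\bm{y}\triangleright\bm{z}\rangle=\langle\bm{x},\bm{y}\rangle$, and the common expansion $4\langle\bm{x},\bm{y}\rangle\langle\bm{y},\bm{z}\rangle\bm{z}-2\langle\bm{x},\bm{z}\rangle\bm{z}-2\langle\bm{x},\bm{y}\rangle\bm{y}+\bm{x}$ for both sides of (Q3) all check out. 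The paper itself gives no proof, citing Azcan--Fenn instead, and your direct axiom-by-axiom computation is exactly the standard argument there; your observation that only symmetry, bilinearity, and $\langle\bm{x},\bm{x}\rangle=1$ are used (so the signature is irrelevant) is also accurate and explains why the Euclidean and Lorentzian cases are proved identically.
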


Finally, we see that ${S^2_1}_{\mathbb{R}}$ is different from ${S^2_1}(r)$ for any $r>0$.

\begin{lemma}
\label{lem_S^2_1(r)is_not_inv}
For $r>0$, the quandle $S^2_{1}(r)$ is not an involutory quandle.
\end{lemma}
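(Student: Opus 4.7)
The plan is to exhibit explicit $x, y \in S^2_1(r)$ for which the involutory identity fails. Applying the conjugacy operation $g \triangleright h = h^{-1} g h$ twice, the identity $(x \triangleright y) \triangleright y = x$ unfolds to $y^{-2} x y^2 = x$; equivalently, $x$ must commute with $y^2$ in $SL(2, \mathbb{R})$.

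Take $y = D(r)$, so that $y^2 = D(2r)$. Since $r > 0$ forces $e^{2r} \neq e^{-2r}$, the same reasoning that yields Proposition \ref{prop_injectivity_exp} shows the isotropy subgroup of $D(2r)$ under conjugation is again the full diagonal subgroup of $SL(2, \mathbb{R})$. Therefore, to defeat the involutory identity at this $y$ it suffices to produce a single non-diagonal $x \in S^2_1(r)$.

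For this I would conjugate $D(r)$ by any element of $SL(2, \mathbb{R})$ that is not itself diagonal; by Proposition \ref{prop_S2r_SL2R-orbit} the result still lies in $S^2_1(r)$. A convenient choice is the rotation
\[
g = \begin{pmatrix}\cos\theta & -\sin\theta \\ \sin\theta & \cos\theta\end{pmatrix}
\]
with $\sin\theta\cos\theta \neq 0$, whose conjugate $x = g^{-1} D(r) g$ has $(1,2)$-entry $(e^{-r} - e^{r})\sin\theta\cos\theta \neq 0$. For this $x$, $y^{-2} x y^2 \neq x$, and the involutory identity is violated.

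The argument is essentially a one-line centralizer calculation; the only conceptual step is translating the involutory axiom into the condition that $x$ commutes with $y^2$, after which Proposition \ref{prop_injectivity_exp} does almost all the work. I do not anticipate any substantive obstacle.
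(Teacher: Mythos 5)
Your proof is correct. The translation of the involutory axiom into the commutation condition $y^{-2}xy^{2}=x$ is also the algebraic heart of the paper's argument, and your centralizer claim is sound: since $e^{2r}\neq e^{-2r}$, any element of $SL(2,\mathbb{R})$ commuting with $D(2r)$ must be diagonal, and your conjugate $x=g^{-1}D(r)g$ indeed has $(1,2)$-entry $(e^{-r}-e^{r})\sin\theta\cos\theta\neq 0$, lies in $S^2_1(r)$ by Proposition \ref{prop_S2r_SL2R-orbit}, and hence fails to commute with $y^{2}$. Where you differ from the paper is the logical packaging, with the roles of the two variables reversed: the paper argues by contradiction, fixing $x=D(r)$, letting $y$ range over all of $S^2_1(r)$, and deducing from Proposition \ref{prop_injectivity_exp} that every such $y$ would have to be diagonal, so that $S^2_1(r)=\{D(r)^{\pm1}\}$, contradicting the infinitude of the conjugacy class. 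Your version instead fixes $y=D(r)$ and produces a single explicit witness $x$, which buys two things: the proof is constructive and needs no appeal to the infinitude of $S^2_1(r)$, and it sidesteps a step the paper leaves implicit, namely the passage from ``$y^{2}$ commutes with $D(r)$'' to ``$y$ is diagonal'' (which requires observing that $y$ commutes with $y^{2}$, a diagonal matrix with distinct entries $e^{\pm 2r}$). The paper's reductio, in exchange, classifies all $y$ for which the involutory identity holds at $D(r)$, rather than exhibiting one failure. Both proofs ultimately rest on the same key fact, the centralizer computation of Proposition \ref{prop_injectivity_exp}.
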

\begin{proof}
We prove by contradiction. Assume $S^2_1(r)$ be an ivolutory quandle. For $D(r)$, which is defined in Proposition \ref{prop_injectivity_exp}, and any $y\in S^2_1(r)$,
\[
(D(r)\triangleright y)\triangleright y=D(r),
\]
that is, $D(r)y^2=y^2D(r)$. By Proposition \ref{prop_injectivity_exp},
\[
S^2_1(r)
=
\{
D(r)^{\pm 1}
\}.
\]
This is contradiction since $S^2_1(r)$ is not finite.
\end{proof}

\begin{prop}
\label{prop_AzcanFenn_vs_ClarkSaito}
For $r>0$, $S^2_1(r)$ is not isomorphic to ${S^2_1}_{\mathbb{R}}$.
\end{prop}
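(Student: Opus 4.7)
The plan is to combine the two preceding results: Lemma \ref{lem_S^2_1(r)is_not_inv} states that $S^2_1(r)$ is not an involutory quandle, while ${S^2_1}_{\mathbb{R}}$ is involutory by the Azcan--Fenn proposition recalled just before. So all that remains is the purely formal observation that being an involutory quandle is preserved under quandle isomorphism.

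Concretely, I would argue by contradiction. Suppose $\varphi : {S^2_1}_{\mathbb{R}} \to S^2_1(r)$ is a quandle isomorphism. Pick any $a, b \in S^2_1(r)$ and set $\bm{x} = \varphi^{-1}(a)$, $\bm{y} = \varphi^{-1}(b)$. Since ${S^2_1}_{\mathbb{R}}$ is involutory, $(\bm{x}\triangleright\bm{y})\triangleright\bm{y}=\bm{x}$. Applying $\varphi$ and using that it is a quandle homomorphism gives $(a\triangleright b)\triangleright b=a$ in $S^2_1(r)$. Because $a,b$ were arbitrary, $S^2_1(r)$ would then be involutory, contradicting Lemma \ref{lem_S^2_1(r)is_not_inv}.

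There is essentially no obstacle here; the substantive content was already done in Lemma \ref{lem_S^2_1(r)is_not_inv}, where Proposition \ref{prop_injectivity_exp} was used to force $y^{2}$ to be diagonal for every $y\in S^2_1(r)$, contradicting the continuum cardinality of $S^2_1(r)$. The only care needed in writing the proposition's proof is to make the quandle-isomorphism-invariance of the kei axiom explicit, so that the appeal to Lemma \ref{lem_S^2_1(r)is_not_inv} is clean.
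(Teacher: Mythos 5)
Your proposal is correct and matches the paper's own proof: both deduce the result from Lemma \ref{lem_S^2_1(r)is_not_inv} together with the Azcan--Fenn fact that ${S^2_1}_{\mathbb{R}}$ is involutory. Your only addition is to spell out explicitly that the kei axiom is preserved under quandle isomorphism, which the paper leaves implicit.
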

\begin{proof}
The quandle $S^2_1(r)$ is not an involutory quandle because of Lemma \ref{lem_S^2_1(r)is_not_inv}. On the other hand, the quandle ${S^2_1}_{\mathbb{R}}$ is an involutory quandle. This is a contradiction.
\end{proof}

By Proposition \ref{prop_AzcanFenn_vs_ClarkSaito}, subquandles of conjugacy quandles are generally different from the quandle composed by the Azcan-Fenn \cite{AzcanFenn1994}, even if both of them have the same topological structure.

\section{Quandle Colorings of $(2,n)$-torus knots}
\label{section_determine_coloring}
In this section, we discuss a coloring of $(2,n)$-torus knots with respect to subquandles of conjugacy quadles.

Let $G$ be a group, $X$ be a subquandle of $\operatorname{Conj}(G)$, and $D$ be the diagram of $(2,n)$-torus knots as shown in Fig. \ref{pic_torus_knot_withcaption}. We consider $X$-colorings of the diagram $D$. By the definition of the quandle coloring, \[
\operatorname{Col}_X(D)
=
\left\{
C:
\{\alpha_0,\cdots,\alpha_{n-1}\}
\to
X
:
\begin{array}{rcl}
{}^{\forall}i &=&  0,1,\cdots,n\\
C(\alpha_{i+2}) &=& C(\alpha_{i+1})^{-1}C(\alpha_{i})C(\alpha_{i+1})
\end{array}
\right\}.
\] 

\begin{lemma}
\label{prop_presentation_C_by_0_1}
For $j=0,1,\cdots,\tfrac{n-1}{2}$ and $C\in \operatorname{Col}_{X}(D)$, 
\[
\begin{array}{ccc}
C(\alpha_{2j}) &=&  
\big(C(\alpha_{0})C(\alpha_{1})\big)^{-j}
C(\alpha_0)
\big(C(\alpha_{0})C(\alpha_{1})\big)^{j}\\
C(\alpha_{2j+1})&=& 
(C(\alpha_{0})C(\alpha_{1}))^{-j}
C(\alpha_1)
(C(\alpha_{0})C(\alpha_{1}))^{j}.
\end{array}
\]
\end{lemma}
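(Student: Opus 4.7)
The plan is to prove both formulas simultaneously by induction on $j$, using only the recursion
\[
C(\alpha_{i+2})=C(\alpha_{i+1})^{-1}C(\alpha_{i})C(\alpha_{i+1})
\]
already recorded in the description of $\operatorname{Col}_X(D)$ (this is nothing but the coloring condition, since $X$ is a subquandle of $\operatorname{Conj}(G)$, so $x\triangleright y=y^{-1}xy$). To keep the bookkeeping under control I would introduce the abbreviations $a:=C(\alpha_0)$, $b:=C(\alpha_1)$ and $w:=ab$, so that the goal becomes
\[
C(\alpha_{2j})=w^{-j}aw^{j},\qquad C(\alpha_{2j+1})=w^{-j}bw^{j}.
\]

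The base case $j=0$ is immediate. For the inductive step, assume the formulas hold up to index $j$ and compute $C(\alpha_{2j+2})$ from the recursion:
\[
C(\alpha_{2j+2})=C(\alpha_{2j+1})^{-1}C(\alpha_{2j})C(\alpha_{2j+1})
=w^{-j}\bigl(b^{-1}ab\bigr)w^{j}.
\]
The key algebraic identity I would then record separately is
\[
w^{-1}aw=b^{-1}a^{-1}\cdot a\cdot ab=b^{-1}ab,
\]
so that $b^{-1}ab=w^{-1}aw$ and the right-hand side becomes $w^{-(j+1)}aw^{j+1}$, as required.

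For $C(\alpha_{2j+3})$ I would use the same recursion together with the formula for $C(\alpha_{2j+2})$ just established:
\[
C(\alpha_{2j+3})
=w^{-(j+1)}a^{-1}w^{j+1}\cdot w^{-j}bw^{j}\cdot w^{-(j+1)}aw^{j+1}
=w^{-(j+1)}\bigl(a^{-1}wbw^{-1}a\bigr)w^{j+1},
\]
and then observe that the inner factor collapses: $a^{-1}(ab)b(b^{-1}a^{-1})a=b$. This yields $C(\alpha_{2j+3})=w^{-(j+1)}bw^{j+1}$, completing the induction.

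The argument is essentially formal and the only thing to be careful about is the cancellation $a^{-1}wbw^{-1}a=b$; once this and the twin identity $w^{-1}aw=b^{-1}ab$ are isolated, the induction goes through in two lines each. There is no conceptual obstacle, only a mild notational one in keeping the exponents of $w$ straight.
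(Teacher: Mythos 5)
Your proposal is correct and takes essentially the same route as the paper, whose entire proof is the one-line remark that the result follows by induction on $j$; your argument is exactly that induction, carried out explicitly via the coloring recursion $C(\alpha_{i+2})=C(\alpha_{i+1})^{-1}C(\alpha_{i})C(\alpha_{i+1})$ and the two cancellation identities $w^{-1}aw=b^{-1}ab$ and $a^{-1}wbw^{-1}a=b$, both of which check out. No gaps.
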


\begin{proof}
The result follows by induction on $j$.
\end{proof}

For $x,y\in X$, we define a map $C_{x,y}:\{\alpha_0,\cdots,\alpha_{n-1}\}\to X$ as
\[
C_{x,y}(\alpha_{m})=
\left\{
\begin{array}{ccl}
    (xy)^{-j}x(xy)^{j} &\mbox{if}& m=2j  \\
    (xy)^{-j}y(xy)^{j} &\mbox{if}& m=2j+1
\end{array}
\right..
\]
The set $\operatorname{Col}_X(D)$ is presented as follows.
\begin{prop}Suppose $k\geq1$ and $n=2k+1$, 
\label{prop_general_ver_coloring}
\[
\operatorname{Col}_X(D)
=
\left\{
C_{x,y}:\{\alpha_0,\cdots,\alpha_{n-1}\}\to X
\ :\ {}
x,y\in X\mbox{ s.t. }(xy)^{k}x=y(xy)^{k}
\right\}.
\] 
\end{prop}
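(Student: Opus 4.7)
The plan is to use Lemma \ref{prop_presentation_C_by_0_1} to parametrize any $C\in\operatorname{Col}_X(D)$ by the pair $(x,y):=(C(\alpha_0),C(\alpha_1))$. The lemma forces $C=C_{x,y}$ on every arc of $D$, so the proposition reduces to identifying which pairs $(x,y)\in X\times X$ make $C_{x,y}$ a valid coloring.

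The recursive formulas in the lemma encode exactly the crossing conditions indexed by $i=0,1,\ldots,n-3$, so those are satisfied tautologically. What remain are the two ``closure'' crossing conditions, corresponding to $i=n-2$ and $i=n-1$, which read
\[
C(\alpha_0)=C(\alpha_{n-1})^{-1}C(\alpha_{n-2})C(\alpha_{n-1}),\qquad C(\alpha_1)=C(\alpha_0)^{-1}C(\alpha_{n-1})C(\alpha_0).
\]
Plugging in the explicit expressions $C(\alpha_{n-1})=(xy)^{-k}x(xy)^k$ and $C(\alpha_{n-2})=(xy)^{-(k-1)}y(xy)^{k-1}$ supplied by the lemma, the second condition becomes $x(xy)^k x=(xy)^{k+1}$; factoring the right-hand side as $xy\cdot(xy)^k$ and canceling an $x$ on the left produces $(xy)^k x=y(xy)^k$. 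A parallel, slightly longer simplification of the first condition should collapse it to the same relation, confirming that the two closure conditions are not independent.

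For the reverse inclusion, given $(x,y)$ satisfying $(xy)^k x=y(xy)^k$, each value $C_{x,y}(\alpha_m)$ can be obtained from $x$ and $y$ by iterated quandle operations inside $X\subseteq \operatorname{Conj}(G)$, so $C_{x,y}$ takes values in $X$; the recursions then ensure all ``forward'' crossing conditions automatically, and the hypothesized relation supplies the two closure conditions by reversing the simplifications above.

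The main obstacle I anticipate is the bookkeeping in the first closure condition, which nests a conjugation of $C(\alpha_{n-2})$ inside a conjugation by $C(\alpha_{n-1})$; with both factors themselves expressed as conjugates of $x$ or $y$ by powers of $xy$, careful telescoping will be required to see that the result again reduces to $(xy)^k x=y(xy)^k$ rather than to a genuinely different relation. That both closure conditions yield the same equation is ultimately a reflection of the standard redundancy in Wirtinger-type presentations of a knot group, so the cancellations should cooperate once tracked in order.
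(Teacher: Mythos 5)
Your proposal is correct and follows essentially the same route as the paper's (much terser) proof, which likewise invokes Lemma \ref{prop_presentation_C_by_0_1} together with the closure identification $\alpha_0=\alpha_n=\alpha_{2k+1}$; the explicit treatment of the two closure crossings, their redundancy, and the remark that $C_{x,y}$ lands in $X$ because conjugation by $(xy)^{j}$ is the iterated quandle operation $z\mapsto(z\triangleright x)\triangleright y$ are exactly the details the paper leaves implicit. The telescoping you anticipated in the first closure condition does go through, and more easily than feared: conjugating $C(\alpha_{n-2})=(xy)^{-(k-1)}y(xy)^{k-1}$ by $C(\alpha_{n-1})=(xy)^{-k}x(xy)^{k}$ collapses, via the identity $x^{-1}(xy)\,y\,(xy)^{-1}x=y$, to $x=(xy)^{-k}y(xy)^{k}$, which is again precisely $(xy)^{k}x=y(xy)^{k}$.
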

\begin{proof}
The result follows from Lemma \ref{prop_presentation_C_by_0_1} and $\alpha_{0}=\alpha_{n}=\alpha_{2k+1}$.
\end{proof}

\begin{figure}
  \centering
  \includegraphics[width=7cm]{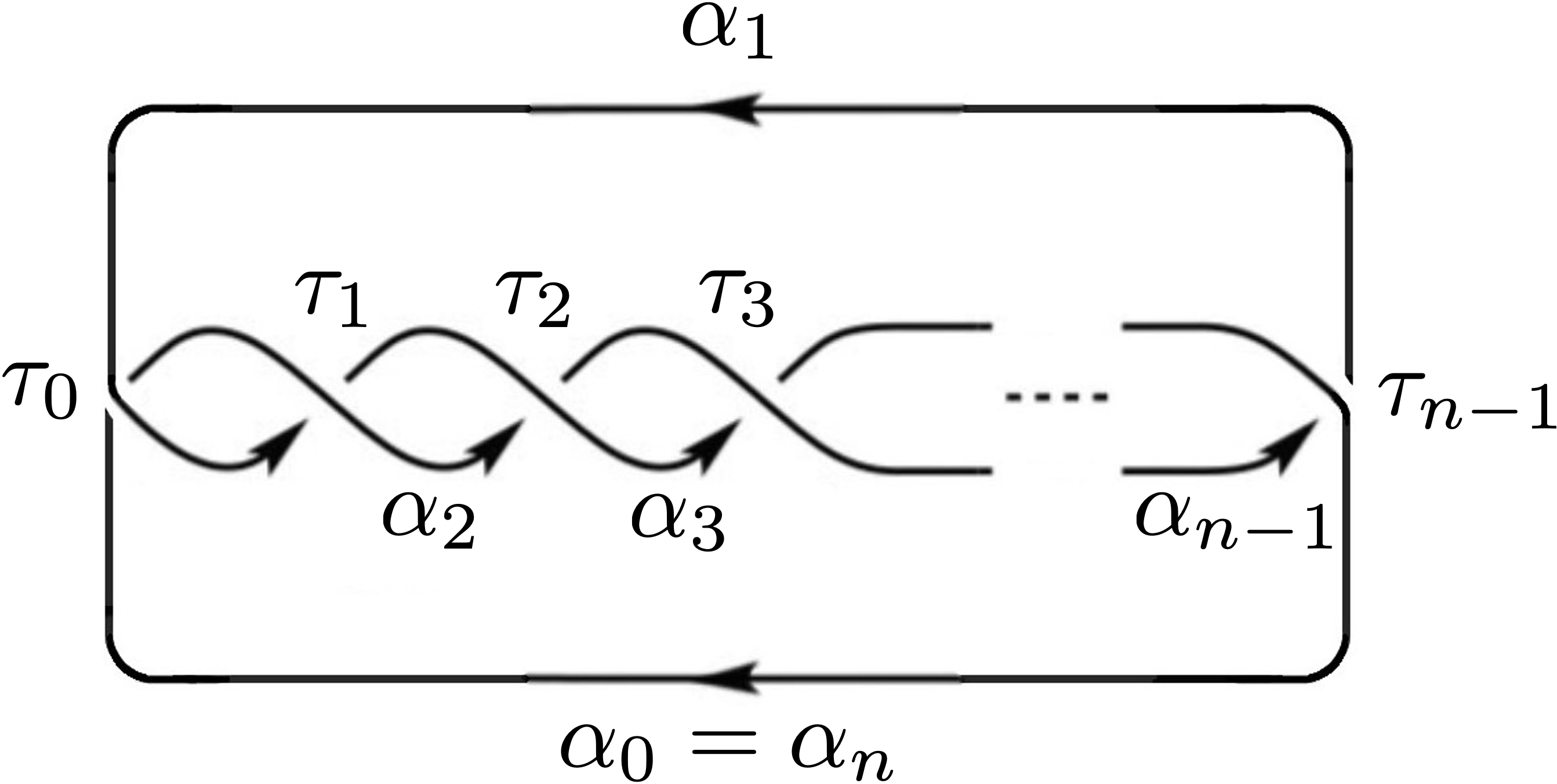}
  \caption{The diagram of $(2,n)$-torus knots and its arcs $\alpha_0,\cdots,\alpha_{n-1}$.}
  \label{pic_torus_knot_withcaption}
\end{figure}

\section{$S^2_1(r)$-Colorings of $(2,n)$-torus knots}
\label{section_determine_S^2_1(r)-coloring}
Suppose $r>0$ and $D$ be the diagram of the $(2,n)$-torus knot as shown in Fig. \ref{pic_torus_knot_withcaption}. We determine $S^2_1(r)$-colorings of $D$.

For an inner automorphism $\rho\in \operatorname{Inn}SL(2,\mathbb{R})$ induces a quandle automorphism of $S^2_1(r)$. We identify $\rho$ and the induced quandle automorphism in this section. The fact induces the action of $\operatorname{Inn}SL(2,\mathbb{R})$ on $\operatorname{Col}_{S^2_1(r)}(D)$.

By Proposition \ref{prop_general_ver_coloring}, we find pairs $x,y\in S^2_1(r)$ satisfying
\begin{equation}
\label{equation_determine_coloring}
    (xy)^{k}x=y(xy)^{k},
\end{equation}
where $k=\tfrac{n-1}{2}$, to determine the $S^2_1(r)$-colorings. Considering the action of $\operatorname{Inn}SL(2,\mathbb{R})$, it does not lose its generality as
\[
x
=
D(r)
=
\begin{pmatrix}
e^{r} & 0\\
0 & e^{-r}
\end{pmatrix}
,\ {}
y
=
\begin{pmatrix}
a & b\\
c & d
\end{pmatrix}.
\]
However, by Proposition \ref{prop_tr_expS2r}, let $a$, $b$, $c$, and $d$ be real numbers that satisfy the following conditions:
$ad-bc=1$ and $a+d=\cosh{r}$.

Let $\lambda$, $\mu\in\mathbb{C}$ be the two solutions of the characteristic equation of $y$
\[
t^2-(ae^{r}+de^{-r})t+1=0.
\]
We are able to determine $x,y$ satisfying equation (\ref{equation_determine_coloring}) in light of the argument in section \ref{section_equation_determine_coloring}.

\noindent(In the case $b=0$ or $c=0$) In light of lemma \ref{lem_presentation_lambda+mu_as_cos} and Lemma \ref{lem_case_b=0_or_c=0}, the real numbers $a$, $b$, $c$, and $d$ satisfying equation (\ref{equation_determine_coloring}) are
\[
(a,b,c,d)=(e^{r},0,0,e^{-r}).
\]
Then, $x$ and $y$ induces a trivial coloring in the sense of Example \ref{example_trivial-hom}. We denote the $S^2_1$-coloring as $C_0$.

\noindent(In the case $b\neq0$ and $c\neq0$ and $\lambda\neq\mu$) In light of Lemma \ref{lem_bneq0_cneq0_lneqm}, the real numbers $a$, $b$, $c$, and $d$ satisfying equation (\ref{equation_determine_coloring}) satisfy
\begin{eqnarray*}
(a,d)&=&
\left(
\frac{-e^{-r}\cosh{r}+\cos{2\theta_j}}{\sinh{r}},
\frac{e^{r}\cosh{r}-\cos{2\theta_j}}{\sinh{r}}
\right),\\
bc&=&-\frac{4\sin^2{\theta_j}(\sin^2{\theta_j}+\sinh^2{r})}{\sinh^2{r}},
\end{eqnarray*}
where $\theta_{j}=\tfrac{\pi j}{2n}$ and $j=1,3,\cdots,\frac{n-1}{2}$. We denote the $S^2_1(r)$-coloring induced by $x,y$ as $C_{j,b,c}$. 

\noindent(In the case $b\neq0$ and $c\neq0$ and $\lambda=\mu$)
 There is no $a$, $b$, $c$, or $d$ satisfying equation in (\ref{equation_determine_coloring}) light of Lemma  \ref{lem_bneq0_cneq0_l=m}.

We summarize the discussion in this section like the following theorem.
\begin{theorem}
Suppose $D$ be the diagram of $(2,n)$-torus knots as shown in Fig. \ref{pic_torus_knot_withcaption}. Then,
\begin{eqnarray*}
\operatorname{Col}_{S^2_{1}(r)}(D)=
\bigcup_{\rho\in\operatorname{Inn}SL(2,\mathbb{R})}
\left\{
\rho\circ C_0
\right\}
\cup
\left\{
\rho\circ C_{j,b,c}
\ :\ {}
\begin{matrix}
j=1,3,\cdots,n-2,\ b,c\in\mathbb{R}\\
bc=-\frac{4\sin^2{\theta_j}(\sin^2{\theta_j}+\sinh^2{r})}{\sinh^2{r}}
\end{matrix}
\right\}.
\end{eqnarray*}
\end{theorem}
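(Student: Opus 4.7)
My plan is to reduce to the classification of pairs $(x,y)$ already given in Proposition \ref{prop_general_ver_coloring} and then to fix $x = D(r)$ using the $\operatorname{Inn}SL(2,\mathbb{R})$-action, after which the remaining possibilities are enumerated by the case analysis outlined just before the theorem (and justified in Section \ref{section_equation_determine_coloring}).

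First I would observe that each inner automorphism $\rho$ of $SL(2,\mathbb{R})$ restricts to a quandle automorphism of the conjugacy class $S^2_1(r)$ and hence acts on $\operatorname{Col}_{S^2_1(r)}(D)$ by sending $C_{x,y}$ to $C_{\rho(x),\rho(y)}$; this action preserves the defining relation $(xy)^k x = y(xy)^k$ with $k = (n-1)/2$. By Proposition \ref{prop_general_ver_coloring}, every $S^2_1(r)$-coloring of $D$ has the form $C_{x,y}$ for some pair satisfying this relation; by Proposition \ref{prop_S2r_SL2R-orbit}, the action of $SL(2,\mathbb{R})$ on $S^2_1(r)$ is transitive, so after applying a suitable $\rho$ I may assume $x = D(r)$. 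Thus it suffices to enumerate all $y\in S^2_1(r)$ solving $(D(r) y)^k D(r) = y (D(r) y)^k$, and then sweep the resulting list of $C_{D(r),y}$ through the orbit of $\operatorname{Inn}SL(2,\mathbb{R})$.

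Second I would parametrize $y = \begin{pmatrix} a & b \\ c & d \end{pmatrix}$ with $ad-bc = 1$ and, by Proposition \ref{prop_tr_expS2r}, $a + d = 2\cosh r$, then split on whether one of $b, c$ vanishes and on whether the two eigenvalues $\lambda, \mu$ of $y$ coincide, which is exactly the trichotomy indicated in the bulleted discussion immediately preceding the statement. Lemma \ref{lem_case_b=0_or_c=0} (with Lemma \ref{lem_presentation_lambda+mu_as_cos}) handles the degenerate case and forces $(a,b,c,d) = (e^r,0,0,e^{-r})$, giving the trivial coloring $C_0$; Lemma \ref{lem_bneq0_cneq0_lneqm} yields the one-parameter families $C_{j,b,c}$ indexed by the odd integers $j \in \{1,3,\dots,n-2\}$ and cut out by the hyperbola $bc = -4\sin^2\theta_j(\sin^2\theta_j + \sinh^2 r)/\sinh^2 r$; and Lemma \ref{lem_bneq0_cneq0_l=m} rules out the non-diagonalizable case $\lambda = \mu$. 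Taking the union over $\rho \in \operatorname{Inn}SL(2,\mathbb{R})$ of these representative colorings reproduces exactly the union in the statement.

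The main obstacle is not the present theorem, which is essentially an assembly, but the case-analysis lemmas themselves: one must expand $(xy)^k x = y (xy)^k$ by diagonalizing $xy$, writing $\lambda + \mu = 2\cos\theta$ once it is verified that the relevant eigenvalues are unimodular, and converting the matrix identity into a trigonometric equation whose only admissible angles are $\theta = \theta_j = \pi j/(2n)$ with $j$ odd. Handling the non-diagonalizable branch via Jordan form and confirming that the excluded values of $j$ really fail is where the delicate computation sits; once those lemmas are granted, the theorem follows by merging the three cases and closing under $\operatorname{Inn}SL(2,\mathbb{R})$.
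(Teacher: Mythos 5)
Your proposal is correct and follows essentially the same route as the paper: the paper's theorem is precisely a summary of the preceding discussion, which reduces to Proposition \ref{prop_general_ver_coloring}, normalizes $x=D(r)$ via the transitive $\operatorname{Inn}SL(2,\mathbb{R})$-action (Proposition \ref{prop_S2r_SL2R-orbit}), and runs the same trichotomy on $b,c$ and $\lambda,\mu$ using Lemmas \ref{lem_case_b=0_or_c=0}, \ref{lem_bneq0_cneq0_lneqm}, and \ref{lem_bneq0_cneq0_l=m} together with Lemma \ref{lem_presentation_lambda+mu_as_cos}. You even correct the paper's small slip by writing the trace condition as $a+d=2\cosh r$, consistent with Proposition \ref{prop_tr_expS2r}.
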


Finally, we end this section by preparing a lemma used in section \ref{section_computing_longitudinal_map}.

\begin{lemma}
\label{lem_(u0u1)^n=-E}
Suppose $\alpha_0$, $\alpha_1$ are arcs of the diagram of $(2,n)$-torus knots as shown in Fig. \ref{pic_torus_knot_withcaption}. Then
\[
(C_{j,b,c}(\alpha_0)C_{j,b,c}(\alpha_{1}))^n
=
\begin{pmatrix}
-1 & 0\\
0 & -1
\end{pmatrix}.
\]
\end{lemma}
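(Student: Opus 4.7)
The plan is to compute $xy$ explicitly and exploit the fact that its characteristic polynomial has a very simple trigonometric form, forcing $xy$ to behave like a rotation of order $2n$.

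First, I would write $x = C_{j,b,c}(\alpha_0) = D(r)$ and $y = C_{j,b,c}(\alpha_1) = \begin{pmatrix} a & b \\ c & d \end{pmatrix}$, where $a$ and $d$ are determined by the case analysis from Section \ref{section_determine_S^2_1(r)-coloring} as
\[
a = \frac{-e^{-r}\cosh r + \cos 2\theta_j}{\sinh r}, \qquad d = \frac{e^{r}\cosh r - \cos 2\theta_j}{\sinh r},
\]
with $\theta_j = \pi j/(2n)$ and $j$ odd. A direct matrix multiplication gives $xy = \begin{pmatrix} e^{r}a & e^{r}b \\ e^{-r}c & e^{-r}d \end{pmatrix}$, from which one reads off $\det(xy) = 1$ and
\[
\operatorname{tr}(xy) = e^{r}a + e^{-r}d = \frac{(e^{r}-e^{-r})\cos 2\theta_j}{\sinh r} = 2\cos 2\theta_j = 2\cos\!\tfrac{\pi j}{n}.
\]

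Next, from the characteristic polynomial $t^2 - 2\cos(\pi j/n)\,t + 1$, the eigenvalues of $xy$ are $e^{\pm i\pi j/n}$. Since $0 < j < n$, these two eigenvalues are distinct, so $xy$ is diagonalizable over $\mathbb{C}$: there is $P\in GL(2,\mathbb{C})$ with $P^{-1}(xy)P = \operatorname{diag}(e^{i\pi j/n}, e^{-i\pi j/n})$. Raising to the $n$-th power gives $P^{-1}(xy)^{n}P = \operatorname{diag}(e^{i\pi j}, e^{-i\pi j}) = (-1)^{j} I$, and since $j$ is odd this equals $-I$. Conjugating back yields $(xy)^{n} = -I$, which is exactly the required identity.

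There is no real obstacle here; the main subtlety is just confirming that the trace simplifies to $2\cos(\pi j /n)$, which is where the precise form of $a$ and $d$ from the Section \ref{section_determine_S^2_1(r)-coloring} case analysis is essential (the terms involving $\cosh r$ cancel and one is left with a pure $\cos 2\theta_j$ contribution). Once that is in hand, the parity of $j$ closes the argument immediately. An alternative purely algebraic route would be to use Cayley–Hamilton to write $(xy)^{n}$ as $U_{n-1}(\cos(\pi j/n))(xy) - U_{n-2}(\cos(\pi j/n)) I$ for Chebyshev polynomials $U_m$ and check that $U_{n-1}(\cos(\pi j/n)) = 0$ and $U_{n-2}(\cos(\pi j/n)) = 1$ for odd $j$, but the eigenvalue argument is cleaner.
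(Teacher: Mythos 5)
Your proof is correct and takes essentially the same route as the paper: the paper also diagonalizes $xy$ over $\mathbb{C}$ (Lemma \ref{lem_to_the_power_of_xy}) with eigenvalues $\lambda,\mu=e^{\pm\pi j\sqrt{-1}/n}$ (Lemma \ref{lem_presentation_of_lambda_as_exp}) and evaluates the $n$-th power through the ratios $(\lambda^{m}-\mu^{m})/(\lambda-\mu)$, which are precisely your Chebyshev values $U_{n-1}(\cos(\pi j/n))=0$ and $U_{n-2}(\cos(\pi j/n))=1$. The only cosmetic difference is that you recover the eigenvalues by computing $\operatorname{trace}(xy)=2\cos(\pi j/n)$ directly from the stated values of $(a,d)$, whereas the paper cites its appendix lemmas derived from the coloring equation; both hinge on the same facts that $\det(xy)=1$ and $j$ is odd.
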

\begin{proof}
By Lemma \ref{lem_to_the_power_of_xy},
\begin{eqnarray*}
&&(C_{j,b,c}(\alpha_0)C_{j,b,c}(\alpha_{1}))^n=\\
&&\frac{1}{\lambda-\mu}
\begin{pmatrix}
-\lambda^{n-1}+\mu^{n-1}+ae^r(\lambda^n-\mu^n) & be^r(\lambda^n-\mu^n)\\
ce^{-r}(\lambda^n-\mu^n) & \lambda^{n+1}-\mu^{n+1}-ae^r(\lambda^n-\mu^n)
\end{pmatrix}.
\end{eqnarray*}
By Lemma \ref{lem_presentation_of_lambda_as_exp},
\[
\frac{\lambda^n-\mu^n}{\lambda-\mu}
=
\frac{\sin{n\theta_{j}}}{\sin{\theta_{j}}}
=
0
\]
and
\[
\frac{\lambda^{n\pm1}-\mu^{n\pm1}}{\lambda-\mu}
=
\frac{\sin{(n\pm1)\theta_{j}}}{\sin{\theta_{j}}}
=
\mp1.
\]
Thus the result follows.
\end{proof}

\section{Quandle colorings and representation of knot groups}
\label{section_introduction_to_NSK_1to1correspondence}
We introduce Nosaka's work to see the one-to-one correspondence between a quandle coloring and a representation of knot groups. See Nosaka \cite{Nosaka2015,Nosaka2017book} for more details. 

Let $K$ be a tame knot in the 3-sphere $S^3$ with a diagram $D$, $\pi_1(S^3\setminus K)$ the knot group of $K$ and $Q_K$ the knot quandle. For any augmented quandle $(X,G,\kappa)$, we define a set
\[
R(K,G)=\{
f\in\operatorname{Hom}(\pi_1(S^3\setminus K),G)
\ :\ {}
{}^{\exists}x\in X,\ f(\mathfrak{m})=\kappa(x)
\}.
\]
\begin{theorem}[Nosaka \cite{Nosaka2015}]\label{thm_Nosaka_correspondence}
Let $(X,G,\kappa)$ be a faithful augmented quandle. Then, there is a bijection
$
\Psi:\operatorname{Col}_X(D)
\xrightarrow{\sim}
R(K,G).
$
\end{theorem}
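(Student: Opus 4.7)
The plan is to construct $\Psi$ and its inverse explicitly via the Wirtinger presentation of $\pi_1(S^3\setminus K)$. Recall that this presentation has one generator $\mathfrak{m}_\alpha$ per arc $\alpha$ of $D$ and one relation per crossing, of the form $\mathfrak{m}_\beta^{-1}\mathfrak{m}_\alpha\mathfrak{m}_\beta = \mathfrak{m}_\gamma$ where $\alpha,\beta,\gamma$ are the incoming-over, under, and outgoing-over arcs, and the base meridian $\mathfrak{m}$ may be taken to be one of the $\mathfrak{m}_{\alpha_0}$. The central observation I intend to exploit is that each Wirtinger relation is precisely the $\kappa$-image of the corresponding coloring condition, so the augmented-quandle axioms encode exactly how colorings descend to representations.

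For the forward direction, given $C\in\operatorname{Col}_X(D)$ I would define $\Psi(C)$ on generators by $\mathfrak{m}_\alpha \mapsto \kappa(C(\alpha))$. The coloring condition at each crossing reads $C(\alpha)\cdot\kappa(C(\beta)) = C(\gamma)$ by the definition of the augmented-quandle operation; applying $\kappa$ and using axiom (1) of Example \ref{def_augmented_quandle} would yield $\kappa(C(\beta))^{-1}\kappa(C(\alpha))\kappa(C(\beta)) = \kappa(C(\gamma))$, which is exactly the Wirtinger relation. Hence $\Psi(C)$ descends to a homomorphism $\pi_1(S^3\setminus K)\to G$, and $\Psi(C)(\mathfrak{m}) = \kappa(C(\alpha_0))\in\kappa(X)$, so $\Psi(C)\in R(K,G)$.

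For the inverse, given $f\in R(K,G)$ with $f(\mathfrak{m}) = \kappa(x_0)$ (the element $x_0\in X$ being unique by faithfulness of $\kappa$), I would assign $C(\alpha_0) = x_0$, and for any other arc $\alpha$ pick a word $w_\alpha\in\pi_K$ with $\mathfrak{m}_\alpha = w_\alpha\mathfrak{m}w_\alpha^{-1}$ and set $C(\alpha) = x_0\cdot f(w_\alpha)^{-1}$. The key step will be showing the ambiguity in $w_\alpha$ is absorbed: if $w\mathfrak{m}w^{-1}=w'\mathfrak{m}w'^{-1}$ in $\pi_K$, then applying $f$ and using axiom (1) gives $\kappa(x_0\cdot f(w)^{-1}) = f(w)\kappa(x_0)f(w)^{-1} = f(w')\kappa(x_0)f(w')^{-1} = \kappa(x_0\cdot f(w')^{-1})$, and faithfulness of $\kappa$ upgrades this to equality in $X$. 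Verifying that $C$ then satisfies the coloring condition at each crossing will follow by running the chain of equivalences from the forward direction in reverse, again invoking injectivity of $\kappa$ to cancel it from both sides.

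The hardest step will be this well-definedness of $C$ in the inverse construction, and it is exactly where the faithfulness hypothesis is indispensable: without it one could only recover $C(\alpha)$ up to the stabilizer of $\kappa$, and $\Psi$ would fail to be injective. Once this point is settled, showing $\Psi\circ\Psi^{-1}=\operatorname{id}$ and $\Psi^{-1}\circ\Psi=\operatorname{id}$ should be a direct check in which injectivity of $\kappa$ is again used to lift equalities in $G$ back to $X$.
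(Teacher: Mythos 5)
Your proposal is correct, and it constructs exactly the map the paper uses: the paper states this theorem as a citation of Nosaka \cite{Nosaka2015} without reproving it, but the bijection it describes --- $\Psi C$ sending each Wirtinger generator $\alpha$ to $\kappa(C(\alpha))$ --- is precisely your forward map, and your inverse via $C(\alpha)=x_0\cdot f(w_\alpha)^{-1}$ together with the faithfulness argument for well-definedness is the standard proof of that cited result. No gaps: your use of injectivity of $\kappa$ to lift the Wirtinger relations back to coloring conditions, and the conjugacy of all meridional generators for a knot, are exactly the points the full argument requires.
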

The bijection $\Psi$ is given as follows. It is known that $\pi_1(S^3\setminus K)$ is generated by the elements corresponding to the arcs of $D$ (Wirtinger presentation, see \cite{BurdeZieschang1985}). For any $X$-coloring $C$, $\Psi C:\pi_1(S^3\setminus K)\to G$ is a group homomorphism satisfying {}
$
\Psi f(\alpha)=\kappa\circ f(\alpha)
$ {}
for any $\alpha$ of $\pi_1(S^3\setminus K)$ corresponding to an arc of $D$.

We give a few facts about the bijection $\Psi$.
\begin{lemma}\label{lem_Nosaka-1to1_correspondence}
\begin{enumerate}
    \item The action of $G$ on $X$ induces the action of $G$ on $\operatorname{Col}_X(D)$. $X$-colorings $C_1,C_2:Q_K\to X$ are in the same $G$-orbit if and only if $\Psi C_1$ and $\Psi C_2$ are conjugate.
    \item An $X$-coloring $C$ is trivial in the sense of Definition \ref{def_trivial-hom} if and only if $\Psi C$ is an abelian representation.
\end{enumerate}
\end{lemma}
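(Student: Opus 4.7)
The plan is to translate every statement about a coloring $C$ into the corresponding statement about the Wirtinger-type generators $\kappa(C(\alpha))$ of the image of $\Psi C$, using the two augmented-quandle axioms $\kappa(x\cdot g)=g^{-1}\kappa(x)g$ and $x\cdot\kappa(x)=x$. The converse directions in both parts will rely crucially on the faithfulness hypothesis, i.e.\ injectivity of $\kappa$.

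For part (1), I would first define the $G$-action on colorings by $(C\cdot g)(\alpha):=C(\alpha)\cdot g$ and verify, using axiom (1), that this preserves the coloring condition at each crossing of $D$ — a short computation using $\kappa(C(\beta_{\tau})\cdot g)=g^{-1}\kappa(C(\beta_{\tau}))g$. Evaluating on a Wirtinger generator $\alpha$ gives
\[
\Psi(C\cdot g)(\alpha)\;=\;\kappa(C(\alpha)\cdot g)\;=\;g^{-1}\,\Psi C(\alpha)\,g,
\]
so $\Psi(C\cdot g)=g^{-1}\,\Psi C\,g$ as group homomorphisms, since such $\alpha$ generate $\pi_{1}(S^{3}\setminus K)$. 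For the converse, if $\Psi C_{2}=g^{-1}\,\Psi C_{1}\,g$, the same identity reads $\kappa(C_{2}(\alpha))=\kappa(C_{1}(\alpha)\cdot g)$ on each Wirtinger generator, and injectivity of $\kappa$ forces $C_{2}(\alpha)=C_{1}(\alpha)\cdot g$ on every arc, i.e.\ $C_{2}=C_{1}\cdot g$.

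For part (2), triviality of the image of $C$ unwinds to the family of equalities $C(\alpha)\cdot\kappa(C(\beta))=C(\alpha)$ for all arcs $\alpha,\beta$ of $D$. Applying $\kappa$ and using axiom (1) turns these into
\[
\kappa(C(\beta))^{-1}\kappa(C(\alpha))\kappa(C(\beta))\;=\;\kappa(C(\alpha)),
\]
that is, pairwise commutation of the Wirtinger generators of $\Psi C(\pi_{1}(S^{3}\setminus K))$, which is precisely abelianness of $\Psi C$. Conversely, abelianness gives these commutation relations, and a final use of injectivity of $\kappa$ lifts them back to $C(\alpha)\cdot\kappa(C(\beta))=C(\alpha)$, i.e.\ $C(\alpha)\triangleright C(\beta)=C(\alpha)$.

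No truly hard step is involved; everything is forced by the augmented-quandle axioms and the Wirtinger presentation. The main conceptual point to keep track of is the repeated appeal to faithfulness in both converse directions: without injectivity of $\kappa$, passing through $\kappa$ would only yield weaker information about $\Psi C$ than is needed to recover the original statement about $C$.
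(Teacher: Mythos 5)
The paper gives no proof of this lemma at all---it is simply stated as a list of facts about the bijection $\Psi$, implicitly imported from Nosaka's correspondence---so there is no in-paper argument to compare against; your proposal supplies the missing proof, and it is correct. Both directions of both parts are handled properly: the well-definedness check for the induced action uses the augmented-quandle axiom $\kappa(x\cdot g)=g^{-1}\kappa(x)g$ exactly where needed; the identity $\Psi(C\cdot g)=g^{-1}(\Psi C)g$, verified on Wirtinger generators, extends to all of $\pi_1(S^3\setminus K)$ because those generators generate the knot group; and you correctly isolate faithfulness of $\kappa$ as the ingredient that makes both converse directions go through. One small gloss is worth making explicit in part (2): Definition \ref{def_trivial-hom} asks that the image of $C$, viewed as a homomorphism $Q_K\to X$, be a trivial \emph{subquandle}, whereas your equalities $C(\alpha)\triangleright C(\beta)=C(\alpha)$ are stated only for arcs of $D$. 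You should note that once these equalities hold for all pairs of arcs, the set of arc colors is itself closed under $\triangleright$ and under the inverse operations $S_{C(\beta)}^{-1}$ (each $S_{C(\beta)}$ fixes each $C(\alpha)$), so it already coincides with the image subquandle, making the reduction legitimate; symmetrically, abelianness of $\Psi C$ concerns its image, which is generated by the elements $\kappa(C(\alpha))$, which is why pairwise commutation of those elements is equivalent to it. These are one-line remarks, but they close the only gaps between what you verify and what the lemma literally asserts.
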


\begin{cor}
\label{cor_domain_longitudinal_mapping_colorings}
Suppose $\alpha_0$ is an arc of $D$, $x\in X$, and $\mathfrak{m}$ is equal to $\alpha_0$ as an element of the knot group $\pi_1(S^3\setminus K)$. Then the bijection $\Psi$ induces the one-to-one correspondence 
\begin{eqnarray*}
&&\{
f\in\operatorname{Hom}(\pi_1(S^3\setminus K),G)
\ :\ {}
f(\mathfrak{m})=\kappa(x)
\}
\simeq
\{
C\in
\operatorname{Col}_{X}(D)
\ :\ {}
C(\alpha_0)=x
\}.
\end{eqnarray*}
\end{cor}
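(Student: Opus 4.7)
The plan is to deduce the corollary directly from Theorem \ref{thm_Nosaka_correspondence} by tracking where $\Psi$ sends the specific arc $\alpha_0$. By hypothesis, $\mathfrak{m}$ and $\alpha_0$ represent the same element of $\pi_1(S^3\setminus K)$, and by the defining property of $\Psi$ recalled just after the theorem, $\Psi C(\alpha_0)=\kappa(C(\alpha_0))$ for every $C\in \operatorname{Col}_X(D)$. So I would restrict the bijection $\Psi$ to the subset on the left-hand side and check that its image is exactly the subset on the right-hand side.

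For the forward inclusion, if $C(\alpha_0)=x$, then $\Psi C(\mathfrak{m})=\Psi C(\alpha_0)=\kappa(C(\alpha_0))=\kappa(x)$, so $\Psi C$ lies in the prescribed subset of $R(K,G)$. For the reverse inclusion, take $f\in R(K,G)$ with $f(\mathfrak{m})=\kappa(x)$ and let $C=\Psi^{-1}(f)\in \operatorname{Col}_X(D)$, which exists and is unique by Theorem \ref{thm_Nosaka_correspondence}. Applying the same characterization gives $\kappa(C(\alpha_0))=\Psi C(\alpha_0)=f(\alpha_0)=f(\mathfrak{m})=\kappa(x)$, so the faithfulness of $(X,G,\kappa)$ forces $C(\alpha_0)=x$. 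Injectivity of the restricted map is inherited from injectivity of $\Psi$.

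The only delicate step is the appeal to faithfulness: without injectivity of $\kappa$, one could not pass from $\kappa(C(\alpha_0))=\kappa(x)$ back to $C(\alpha_0)=x$, and the claimed bijection would degenerate to a surjection. Everything else is routine bookkeeping on top of Nosaka's theorem, so I do not expect any genuine obstacle beyond invoking the hypotheses already in place.
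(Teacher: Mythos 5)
Your proof is correct and is essentially the argument the paper intends: the paper states this corollary without any written proof, treating it as an immediate consequence of Theorem \ref{thm_Nosaka_correspondence} together with the characterization $\Psi C(\alpha)=\kappa(C(\alpha))$ on arc generators, and your explicit use of faithfulness to recover $C(\alpha_0)=x$ from $\kappa(C(\alpha_0))=\kappa(x)$ is exactly the step being suppressed. One minor slip in your closing aside: if $\kappa$ failed to be injective, the restriction of $\Psi$ to $\{C : C(\alpha_0)=x\}$ would still be injective but could fail to hit every $f$ with $f(\mathfrak{m})=\kappa(x)$, so the correspondence would degenerate to an injection rather than a surjection --- though since faithfulness is already a hypothesis of Theorem \ref{thm_Nosaka_correspondence}, this does not affect your argument.
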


\section{$S^2_1(r)$-colorings and hyperbolic representation of knot groups}
\label{section_S^2_1(r)-coloring_vs_hyperbolic_rep}
Suppose $D$ is the diagram of $(2,n)$-torus knots as shown in Fig. \ref{pic_torus_knot_withcaption}.

\begin{lemma}
\label{lem_hyperbolicrep_tr=2coshr}
Let $K$ be a knot and $\mathfrak{m}\in\pi_1(S^3\setminus K)$ be a meridian. Then,
\begin{eqnarray*}
&&\{
f\in\operatorname{Hom}(\pi_1(S^3\setminus K),SL(2,\mathbb{R}))
\ :\ {}
{}^{\exists}x\in S^2_1(r),\ f(\mathfrak{m})=x
\}\\
&=&
\{
f\in\operatorname{Hom}(\pi_1(S^3\setminus K),SL(2,\mathbb{R}))
\ :\ {}
\operatorname{trace}f(\mathfrak{m})=2\cosh{r}
\}.
\end{eqnarray*}
\end{lemma}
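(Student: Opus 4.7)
The plan is to observe that the statement is essentially a repackaging of Proposition \ref{prop_tr_expS2r} applied to the single matrix $f(\mathfrak{m})$, so no new input is needed beyond rewriting the existential quantifier.

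First I would rewrite the left-hand side condition. The clause ``${}^{\exists}x\in S^2_1(r),\ f(\mathfrak{m})=x$'' is nothing more than the membership statement $f(\mathfrak{m})\in S^2_1(r)$. Thus the left-hand set equals
\[
\{f\in\operatorname{Hom}(\pi_1(S^3\setminus K),SL(2,\mathbb{R}))\ :\ f(\mathfrak{m})\in S^2_1(r)\}.
\]

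Next I would invoke Proposition \ref{prop_tr_expS2r}, which identifies $S^2_1(r)$ with the set of matrices in $SL(2,\mathbb{R})$ whose trace equals $2\cosh r$. Applying this characterization to the element $f(\mathfrak{m})\in SL(2,\mathbb{R})$ yields the equivalence
\[
f(\mathfrak{m})\in S^2_1(r)\ \Longleftrightarrow\ \operatorname{trace}f(\mathfrak{m})=2\cosh r,
\]
which matches the condition defining the right-hand side verbatim. Both inclusions follow immediately, completing the proof.

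Since the content is a one-line consequence of Proposition \ref{prop_tr_expS2r}, there is no genuine obstacle; the only care needed is to make explicit that ``there exists $x\in S^2_1(r)$ with $f(\mathfrak{m})=x$'' is the same as ``$f(\mathfrak{m})$ lies in $S^2_1(r)$'', a logical triviality whose mention is worthwhile only to make the role of Proposition \ref{prop_tr_expS2r} transparent.
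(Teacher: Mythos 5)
Your proposal is correct and follows exactly the paper's own route: the paper proves this lemma by a single appeal to Proposition \ref{prop_tr_expS2r}, which is precisely your argument, with the unwinding of the existential quantifier made explicit. No further comment is needed.
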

\begin{proof}
By Proposition \ref{prop_tr_expS2r}, this lemma is true.
\end{proof}

\begin{prop}
Suppose $K$ be a knot with a diagram $D$, and $r>0$. Then, there is a bijection
\[
\Psi_{K,r}:
\operatorname{Col}_{S_1^2(r)}(D)
\xrightarrow{\sim}
\{
f\in\operatorname{Hom}(\pi_1(S^3\setminus K),SL(2,\mathbb{R}))
\ :\ {}
\operatorname{trace}f(\mathfrak{m})=2\cosh r
\}
\]
\end{prop}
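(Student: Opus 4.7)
The plan is to obtain $\Psi_{K,r}$ by composing Nosaka's general bijection (Theorem \ref{thm_Nosaka_correspondence}) with the trace characterization in Lemma \ref{lem_hyperbolicrep_tr=2coshr}. Since Proposition \ref{prop_S2r_SL2R-orbit} (together with the fact following it that $S^2_1(r)$ is a subquandle of $\operatorname{Conj}(SL(2,\mathbb{R}))$) gives an embedding of $S^2_1(r)$ as a conjugacy subquandle, Example \ref{example_presentation_as_augmented_quandle} realises it as a faithful augmented quandle $(S^2_1(r),\,SL(2,\mathbb{R}),\,i_{S^2_1(r)})$ with $\kappa=i_{S^2_1(r)}$ the inclusion.

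With this setup in hand, Theorem \ref{thm_Nosaka_correspondence} applied to $(X,G,\kappa)=(S^2_1(r),SL(2,\mathbb{R}),i_{S^2_1(r)})$ supplies a bijection
\[
\Psi:\operatorname{Col}_{S^2_1(r)}(D)\xrightarrow{\sim}R(K,SL(2,\mathbb{R})).
\]
Because $\kappa$ is the inclusion, the defining condition of $R(K,SL(2,\mathbb{R}))$ reduces to $f(\mathfrak{m})\in S^2_1(r)$, i.e.\ the existence of $x\in S^2_1(r)$ with $f(\mathfrak{m})=x$. Lemma \ref{lem_hyperbolicrep_tr=2coshr} then identifies this set with
\[
\{f\in\operatorname{Hom}(\pi_1(S^3\setminus K),SL(2,\mathbb{R}))\ :\ \operatorname{trace}f(\mathfrak{m})=2\cosh r\}.
\]

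Defining $\Psi_{K,r}$ as this composite completes the argument. There is no genuine obstacle here: the proposition is essentially a bookkeeping corollary assembled from the already established results, and the only thing to check carefully is that the hypotheses of Theorem \ref{thm_Nosaka_correspondence} (faithfulness of the augmented quandle) are met for $(S^2_1(r),SL(2,\mathbb{R}),i_{S^2_1(r)})$, which is immediate because the inclusion map is injective.
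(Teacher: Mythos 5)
Your proposal is correct and follows exactly the paper's own route: the paper proves this proposition by citing Example \ref{example_presentation_as_augmented_quandle} (which realises $S^2_1(r)$ as the faithful augmented quandle $(S^2_1(r),SL(2,\mathbb{R}),i_{S^2_1(r)})$), Theorem \ref{thm_Nosaka_correspondence}, and Lemma \ref{lem_hyperbolicrep_tr=2coshr}, precisely the composite you describe. Your write-up simply makes the bookkeeping explicit, including the faithfulness check that the paper leaves implicit.
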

\begin{proof}
The result follows from Example \ref{example_presentation_as_augmented_quandle},  Theorem \ref{thm_Nosaka_correspondence}, and Lemma \ref{lem_hyperbolicrep_tr=2coshr}.
\end{proof}
By Lemma \ref{lem_Nosaka-1to1_correspondence}, we have the following properties.
\begin{enumerate}
    \item The action of $SL(2,\mathbb{R})$ on $S^2_1(r)$ induces the action of $SL(2,\mathbb{R})$ on $\operatorname{Col}_{S^2_1(r)}(D)$. Then, $S^2_1(r)$-colorings $C_1,C_2$ are in the same $SL(2,\mathbb{R})$-orbit if and only if $\Phi_{K, r}C_1$ and $\Phi_{K, r} C_2$ are conjugate.
    \item A $S^2_1(r)$-coloring $C$ is trivial in the sense of Definition \ref{def_trivial-hom} if and only if $\Phi_{K, r}C$ is abelian.
\end{enumerate}

Finally, we end this section by preparing a proposition used in section \ref{section_computing_longitudinal_map}.
\begin{prop}
\label{prop_relation_longitudinalmap_coloring}
Let $K$ be $(2,n)$-torus knots with a diagram $D$ as shown in Fig. \ref{pic_torus_knot_withcaption}, $\mathfrak{m}\in\pi_1(S^3\setminus K)$ be a meridian, and $D(r)$ be an $S^2_1(r)$-element defined in Proposition \ref{prop_injectivity_exp}. For $\rho\in\operatorname{Inn}SL(2,\mathbb{R})$, 
\begin{eqnarray*}
&&\{
f\in\operatorname{Hom}(\pi_1(S^3\setminus K),SL(2,\mathbb{R}))
\ :\ {}
f(\mathfrak{m})=\rho(D(r))
\}\\
&\simeq&
\{
C\in
\operatorname{Col}_{S^2_1(r)}(D)
\ :\ {}
C(\alpha_0)=\rho(D(r))
\}\\
&=&
\{\rho\circ C_0\}
\cup
\left\{
\rho\circ C_{j,b,c}
\ :\ {}
\begin{matrix}
j=1,3,\cdots,n-2,\ b,c\in\mathbb{R}\\
bc=-\frac{4\sin^2{\theta_j}(\sin^2{\theta_j}+\sinh^2{r})}{\sinh^2{r}}
\end{matrix}
\right\}
\end{eqnarray*}
\end{prop}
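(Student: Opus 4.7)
The plan is to split the statement into two independent steps: first the bijection (the $\simeq$), and then the explicit description (the $=$). The $\simeq$ is essentially a direct application of Corollary \ref{cor_domain_longitudinal_mapping_colorings}. Taking the faithful augmented quandle $(S^2_1(r),SL(2,\mathbb{R}),i_{S^2_1(r)})$ from Example \ref{example_presentation_as_augmented_quandle}, where $\kappa=i_{S^2_1(r)}$ is the inclusion, we have $\kappa(\rho(D(r)))=\rho(D(r))$, and choosing $\alpha_0$ to be the arc whose Wirtinger generator is $\mathfrak{m}$ yields precisely the bijection with colorings satisfying $C(\alpha_0)=\rho(D(r))$.

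For the $=$, I would restrict the classification theorem (the theorem immediately preceding this proposition) to colorings with $C(\alpha_0)=\rho(D(r))$. The containment $\supseteq$ is immediate: by construction $C_0(\alpha_0)=D(r)$ and $C_{j,b,c}(\alpha_0)=D(r)$, so every coloring in the listed family evaluates to $\rho(D(r))$ at $\alpha_0$. For $\subseteq$, take an arbitrary element of $\operatorname{Col}_{S^2_1(r)}(D)$ and write it, by the classification, as $\rho'\circ C_0$ or $\rho'\circ C_{j,b,c}$ for some $\rho'\in\operatorname{Inn}SL(2,\mathbb{R})$. The constraint $\rho'(D(r))=\rho(D(r))$ forces $\rho^{-1}\circ\rho'$ to stabilize $D(r)$; by Proposition \ref{prop_injectivity_exp} this inner automorphism is conjugation by a diagonal matrix $d=\operatorname{diag}(t,t^{-1})$. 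Denoting that conjugation by $\rho_d$, we obtain $\rho'=\rho\circ\rho_d$.

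It then remains to show that $\rho_d$ maps the listed families back into themselves. For the trivial coloring this is immediate: $\rho_d\circ C_0=C_0$ since the image $\{D(r)\}$ is fixed by $\rho_d$. For $C_{j,b,c}$, the coloring is determined by its values on $\alpha_0$ and $\alpha_1$ via Lemma \ref{prop_presentation_C_by_0_1}, and since $\rho_d$ is a quandle automorphism of $S^2_1(r)$, it suffices to check those two arcs. On $\alpha_0$ we get $\rho_d(D(r))=D(r)$, and a direct computation gives
\[
\rho_d\!\begin{pmatrix}a&b\\c&d\end{pmatrix}
=\begin{pmatrix}a&t^{-2}b\\t^{2}c&d\end{pmatrix},
\]
so $\rho_d\circ C_{j,b,c}=C_{j,t^{-2}b,\,t^{2}c}$; the defining constraint $bc=-4\sin^2\theta_j(\sin^2\theta_j+\sinh^2 r)/\sinh^2 r$ is preserved because $(t^{-2}b)(t^{2}c)=bc$. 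Hence every coloring with $C(\alpha_0)=\rho(D(r))$ can be rewritten as $\rho\circ C_0$ or $\rho\circ C_{j,b',c'}$ in the indicated family.

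The main (though still modest) obstacle is the verification that the reparametrization $\rho_d\circ C_{j,b,c}=C_{j,t^{-2}b,\,t^{2}c}$ holds globally on all arcs; this is handled cleanly by invoking Lemma \ref{prop_presentation_C_by_0_1}, which reduces the check to $\alpha_0$ and $\alpha_1$ once one observes that $\rho_d$ is a quandle automorphism and therefore commutes with the Wirtinger recursion. With that in hand, the two steps combine to give the proposition.
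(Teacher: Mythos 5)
Your proof is correct. The first half is exactly the paper's: the bijection $\simeq$ is Corollary \ref{cor_domain_longitudinal_mapping_colorings} applied to the faithful augmented quandle $(S^2_1(r),SL(2,\mathbb{R}),i_{S^2_1(r)})$ of Example \ref{example_presentation_as_augmented_quandle}, with $\mathfrak{m}$ identified with $\alpha_0$. For the equality of sets, though, you take a genuinely different (if mildly roundabout) route. The paper's one-line proof points back to the case analysis of sections \ref{section_determine_coloring}--\ref{section_determine_S^2_1(r)-coloring}, where the normalization $C(\alpha_0)=x=D(r)$ was \emph{already imposed}; that analysis therefore directly classifies the colorings with $C(\alpha_0)=D(r)$ as $\{C_0\}\cup\{C_{j,b,c}\}$, and composing with $\rho$ gives the stated set with no further work. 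You instead start from the global classification theorem (the union over all $\rho'\in\operatorname{Inn}SL(2,\mathbb{R})$) and must then resolve the resulting ambiguity in $\rho'$: the constraint $\rho'(D(r))=\rho(D(r))$ forces $\rho^{-1}\circ\rho'$ to be conjugation by a diagonal matrix via Proposition \ref{prop_injectivity_exp}, and your computation $\rho_d\circ C_{j,b,c}=C_{j,t^{-2}b,\,t^{2}c}$ --- legitimate, since a coloring is determined by its values on $\alpha_0,\alpha_1$ by Lemma \ref{prop_presentation_C_by_0_1} and $\rho_d$ is simultaneously a group and quandle automorphism --- shows the listed family is stable under this stabilizer because the product $bc$ is invariant under $(b,c)\mapsto(t^{-2}b,t^{2}c)$. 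This costs you an extra step the paper's route avoids, but it buys something the paper leaves implicit: it makes precise exactly how redundant the parametrization by $\rho$ in the classification theorem is, namely redundant precisely up to the diagonal stabilizer of $D(r)$ acting on the off-diagonal parameters. Both arguments are sound, and yours is arguably the more self-contained reading, since it relies only on the stated classification theorem rather than on re-opening its proof.
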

\begin{proof}
The result follows in light of the argument in section \ref{section_determine_coloring} and Corollorary \ref{cor_domain_longitudinal_mapping_colorings}.
\end{proof}

\section{The longitudinal mapping knot invariant for $SL(2,\mathbb{R})$}
\label{section_computing_longitudinal_map}
In this section, we determine the value of the longitudinal mapping for $SL(2,\mathbb{R})$ in the case $x\in S^2_1(r)$ and $K$ are $(2,n)$-torus knots.

We introduce the definition of the longitudinal mapping knot invariant defined by Clark-Saito \cite{ClarkSaito}. According to Clark-Saito \cite{ClarkSaito}, the longitudinal mapping is an extention of the quandle cocycle invariant defined by Carter et al. \cite{CJKLS}  and the knot colouring polynomial defined by Eisermann \cite{Eisermann2007}.
\begin{definition}[Clark-Saito \cite{ClarkSaito}]
Let $G$ be a group, $x$ be an elemnt of $G$, and $K$ be a knot. The longitudinal mapping is a map
\[
\mathcal{L}_{G}^{x}(K):
\{
f\in
\operatorname{Hom}(\pi_1(S^3\setminus K),G)
\ :\ {}
f(\mathfrak{m})=x
\}
\to G
\quad
f\mapsto f(\mathfrak{l}),
\]
where $\mathfrak{m}\in\pi_1(S^3\setminus K)$ is a meridian and $\mathfrak{l}\in\pi_1(S^3\setminus K)$ be a longitude. When there is no choice of confusion, we write $\mathcal{L}_{G}^{x}$ inplace of $\mathcal{L}_{G}^{x}(K)$.
\end{definition}

\begin{rem}
The definition of longitudinal mapping does not depend on the meridian-longitude pair. See \cite[Remark 3.3 and Theorem B.4]{ClarkSaito} for more detail.
\end{rem}

By Proposition \ref{prop_relation_longitudinalmap_coloring}, we identify the domain of the longitudinal mapping as a set of some quandle colorings.

Suppose $K$ be $(2,n)$-torus knot with a diagram $D$ as showed in Fig. \ref{pic_torus_knot_withcaption}.

\begin{theorem}
For $j=1,3,\cdots,n-2$ and an inner automorphism $\rho\in\operatorname{Inn}G$, 
\begin{eqnarray*}
\mathcal{L}^x_G(\Psi_{K,r}(\rho\circ C_0))
&=&
\begin{pmatrix}
1 & 0\\
0 & 1
\end{pmatrix},\\
\mathcal{L}^x_G(\Psi_{K,r}(\rho\circ C_{j,b,c}))
&=&
\rho(
\begin{pmatrix}
-e^{-2nr} & 0\\
0 & -e^{2nr}
\end{pmatrix}).
\end{eqnarray*}
\end{theorem}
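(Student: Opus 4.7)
The plan is to identify $\mathcal{L}^x_G$ applied to $\Psi_{K,r}(C)$ with the evaluation of that representation on the preferred longitude $\mathfrak{l}\in\pi_1(S^3\setminus K)$, reduce this evaluation to a closed expression in the colours $C(\alpha_0)$ and $C(\alpha_1)$, and then invoke Lemma~\ref{lem_(u0u1)^n=-E} for the non-trivial coloring while using triviality of $C_0$ for the trivial one.

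The first step is to pin down the preferred longitude in the Wirtinger form. Taking $\mathfrak{m}=\alpha_0$, the element $z=(\alpha_0\alpha_1)^n$ is the standard central generator of the $(2,n)$-torus knot group under the identification with $\langle a,b\mid a^2=b^n\rangle$, and $z$ abelianises to $\mathfrak{m}^{2n}$. The unique peripheral element that is null-homologous in $H_1(S^3\setminus K)$ and commutes with $\mathfrak{m}$ is therefore
\[
\mathfrak{l}=(\alpha_0\alpha_1)^n\alpha_0^{-2n}.
\]
This identity is the hinge of the whole argument.

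For the trivial coloring $\rho\circ C_0$ every arc receives the colour $\rho(D(r))$, so
\[
\mathcal{L}^x_G(\Psi_{K,r}(\rho\circ C_0))=\rho(D(r))^{2n}\,\rho(D(r))^{-2n}=I.
\]
For the non-trivial coloring $\rho\circ C_{j,b,c}$, Lemma~\ref{lem_(u0u1)^n=-E} supplies $(C_{j,b,c}(\alpha_0)C_{j,b,c}(\alpha_1))^n=-I$, while $C_{j,b,c}(\alpha_0)=D(r)$ gives $C_{j,b,c}(\alpha_0)^{-2n}=\operatorname{diag}(e^{-2nr},e^{2nr})$. Since $\rho$ is a group homomorphism of $SL(2,\mathbb{R})$, multiplying these two matrices inside $\rho$ produces the announced value $\rho(\operatorname{diag}(-e^{-2nr},-e^{2nr}))$.

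The principal obstacle I expect is the bookkeeping that pins down $\mathfrak{l}=(\alpha_0\alpha_1)^n\alpha_0^{-2n}$: one must verify both that $(\alpha_0\alpha_1)^n$ is central in $\pi_1(S^3\setminus K)$ and that the exponent $-2n$ precisely cancels the linking-number (writhe) contribution, so that the resulting peripheral curve is Seifert-framed. Once this identity is secured, the rest of the proof is the one-line matrix computation above.
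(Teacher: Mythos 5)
Your two-case computation is exactly the paper's: the trivial coloring gives $f(\alpha_0)^{-2n}(f(\alpha_0)^2)^n=I$, and for $\rho\circ C_{j,b,c}$ one combines $C_{j,b,c}(\alpha_0)=D(r)$ with Lemma~\ref{lem_(u0u1)^n=-E} and pulls the word through $\rho$. Where you genuinely diverge is in how you obtain the hinge identity $\mathfrak{l}=(\alpha_0\alpha_1)^n\alpha_0^{-2n}$. The paper proves this (Lemma~\ref{lem_correction_ClarkSaito_lem6.3}, Appendix~A) combinatorially: it reads off the Burde--Zieschang Wirtinger word $\mathfrak{l}=(\alpha_1\alpha_3\cdots\alpha_{2k-1})(\alpha_0\alpha_2\cdots\alpha_{2k})\alpha_0^{-n}$ and collapses it using Lemma~\ref{lem_presentation_of_arcs_wrt_Wirtinger_presentation} --- precisely the diagrammatic bookkeeping you hoped to bypass, and the point of the paper's appendix, since it corrects a typo in Clark--Saito's version of the same computation. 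You instead argue through the Seifert-fibered structure: $(\alpha_0\alpha_1)^n$ is the central element of the torus knot group, and the longitude is the Seifert-framed correction $z\mathfrak{m}^{-2n}$. This route is attractive because it explains \emph{why} the formula holds rather than verifying it, and it generalizes immediately to $(p,q)$-torus knots via $\mathfrak{l}=z\mathfrak{m}^{-pq}$.

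However, as written your derivation has a real gap, which you partly flag yourself. First, the characterization ``the unique peripheral element that is null-homologous and commutes with $\mathfrak{m}$'' is not a unique characterization: every power $\mathfrak{l}^b$ (including the identity) is peripheral, null-homologous, and commutes with $\mathfrak{m}$. The abelianization argument only shows that \emph{if} $z$ lies in the peripheral subgroup, then $z=\mathfrak{m}^{2n}\mathfrak{l}^b$ for some $b\in\mathbb{Z}$; the substantive claims --- that $z$ is peripheral at all, and that $b=1$ with the correct sign (the regular fiber meets the boundary torus in slope $pq=2n$) --- are exactly the ``linking-number contribution'' you defer, and they require either a citation (e.g.\ the standard Seifert-fibered description of torus knot exteriors in Burde--Zieschang) or an argument of comparable weight to the paper's appeal to \cite[3.13 Remark]{BurdeZieschang1985}. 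Note the sign matters here: the wrong framing convention would replace the answer by its inverse $\rho(\operatorname{diag}(-e^{2nr},-e^{-2nr}))$, so it cannot be waved away. Second, the identification of $(\alpha_0\alpha_1)^n$ with the central generator also deserves a line: with $n=2k+1$, setting $u=(\alpha_0\alpha_1)^k\alpha_0$ and $v=\alpha_0\alpha_1$, the Wirtinger relation $\alpha_1(\alpha_0\alpha_1)^k=(\alpha_0\alpha_1)^k\alpha_0$ gives $u^2=v^n=(\alpha_0\alpha_1)^n$, realizing the presentation $\langle u,v \mid u^2=v^n\rangle$ whose center is $\langle u^2\rangle$. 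With those two points supplied (the second by the computation just indicated, the first by citation), your proof is complete and arguably cleaner than the paper's; without them, the hinge identity is asserted rather than proven.
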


\begin{proof}
By Lemma \ref{lem_correction_ClarkSaito_lem6.3}, for any representation $f:\pi_1(S^3\setminus K)\to SL(2,\mathbb{R})$,
\[
f(\mathfrak{l})=f(\alpha_{0})^{-2n}(f(\alpha_{0})f(\alpha_{1}))^{n}.
\]
By Proposition \ref{prop_relation_longitudinalmap_coloring}, we should consider the following two cases.

\noindent{(In the case $f=\Psi_{K,r}(C_{0})$)} Since  $f(\alpha_{0})=f(\alpha_{1})$,
\[
\mathcal{L}_{G}^{x}(f)
=
f(\alpha_{0})^{-2n}(f(\alpha_{0})f(\alpha_{0}))^{n}
=
\begin{pmatrix}
1 & 0\\
0 & 1
\end{pmatrix}.
\]

\noindent{(In the case $f=\Psi_{K,r}(C_{0})$)} By Lemma \ref{lem_(u0u1)^n=-E},
\[
\mathcal{L}_{G}^{x}(f)
=
\rho(
\begin{pmatrix}
e^{r} & 0\\
0 & e^{-r}
\end{pmatrix}^{-2n}
\begin{pmatrix}
-1 & 0\\
0 & -1
\end{pmatrix})
=
\rho(
\begin{pmatrix}
-e^{-2nr} & 0\\
0 & -e^{2nr}
\end{pmatrix}).
\]
\end{proof}

\appendix
\section{A presentation of a longitude of $(2,n)$-torus knots}
We see a presentation of a longitude of $(2,n)$-torus knots. The content of this section has already been done in \cite[Lemma 6.3]{ClarkSaito} essentially, but there is a fatal typographical error in the proof, so we prove it again for completeness. See Remark \ref{rem_ClarkSaito_Lem6.3} for more details on \cite[Lemma 6.3]{ClarkSaito}.

Let $K$ be $(2,n)$-torus knots with a diagram $D$ as showed in Fig. \ref{pic_torus_knot_withcaption}. The knot group $\pi_{1}(S^3\setminus K)$ has a Wirtinger presentation with respect to $D$:
\[
\left\langle
\alpha_{0},\cdots,\alpha_{n-1}
\ :\ {}
\alpha_{i+2}^{-1}\alpha_{i+1}^{-1}\alpha_{i}\alpha_{i+1} 
\quad
(j = 0,1,\cdots,{n-2})
\right\rangle.
\]

\begin{lemma}
\label{lem_presentation_of_arcs_wrt_Wirtinger_presentation}
For $j=0,\cdots,\tfrac{n-1}{2}$,
\begin{eqnarray*}
\alpha_{2j}&=&(\alpha_{0}\alpha_{1})^{-j}\alpha_{0}(\alpha_{0}\alpha_{1})^{j},\\
\alpha_{2j+1}&=&(\alpha_{0}\alpha_{1})^{-j}\alpha_{1}(\alpha_{0}\alpha_{1})^{j}.
\end{eqnarray*}
\end{lemma}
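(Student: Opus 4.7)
The plan is to proceed by simultaneous induction on $j$, using the Wirtinger relations $\alpha_{i+2}=\alpha_{i+1}^{-1}\alpha_{i}\alpha_{i+1}$ from the presentation of $\pi_{1}(S^{3}\setminus K)$. This is the group-theoretic counterpart of Lemma \ref{prop_presentation_C_by_0_1}, so the two proofs run in parallel, with group multiplication in the knot group playing the role of the conjugation quandle operation used there on the set of colors.

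The base case $j=0$ reduces to the tautologies $\alpha_{0}=\alpha_{0}$ and $\alpha_{1}=\alpha_{1}$. For the inductive step, assume the formulas hold for some $j\leq (n-3)/2$. Applying the Wirtinger relation to $\alpha_{2j+2}=\alpha_{2j+1}^{-1}\alpha_{2j}\alpha_{2j+1}$ and substituting both inductive hypotheses, the central factors of $(\alpha_{0}\alpha_{1})^{\pm j}$ cancel, leaving
\[
\alpha_{2j+2}=(\alpha_{0}\alpha_{1})^{-j}\,\alpha_{1}^{-1}\alpha_{0}\alpha_{1}\,(\alpha_{0}\alpha_{1})^{j}.
\]
The essential algebraic step is the identity $\alpha_{1}^{-1}\alpha_{0}\alpha_{1}=(\alpha_{0}\alpha_{1})^{-1}\alpha_{0}(\alpha_{0}\alpha_{1})$, verified by direct expansion, which rewrites the right-hand side as $(\alpha_{0}\alpha_{1})^{-(j+1)}\alpha_{0}(\alpha_{0}\alpha_{1})^{j+1}$, as required.

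For the second formula, I would apply the Wirtinger relation $\alpha_{2j+3}=\alpha_{2j+2}^{-1}\alpha_{2j+1}\alpha_{2j+2}$, substituting the formula just proved for $\alpha_{2j+2}$ together with the inductive hypothesis for $\alpha_{2j+1}$. After the telescoping cancellations of the $(\alpha_{0}\alpha_{1})^{\pm(j+1)}$ and $(\alpha_{0}\alpha_{1})^{\pm j}$ blocks, the inner word reduces to $\alpha_{0}^{-1}(\alpha_{0}\alpha_{1})\alpha_{1}(\alpha_{0}\alpha_{1})^{-1}\alpha_{0}$, which simplifies to $\alpha_{1}$, yielding the claimed expression $(\alpha_{0}\alpha_{1})^{-(j+1)}\alpha_{1}(\alpha_{0}\alpha_{1})^{j+1}$. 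No genuine obstacle arises; the only point that requires care is correctly bookkeeping the exponents of $(\alpha_{0}\alpha_{1})$ through the cancellations, and the verification of the single conjugation identity above is the one non-mechanical ingredient.
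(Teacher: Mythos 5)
Your proposal is correct and follows exactly the paper's approach: the paper's proof is simply ``The result follows by induction on $j$,'' and your simultaneous induction via the Wirtinger relations $\alpha_{i+2}=\alpha_{i+1}^{-1}\alpha_{i}\alpha_{i+1}$, with the conjugation identity $\alpha_{1}^{-1}\alpha_{0}\alpha_{1}=(\alpha_{0}\alpha_{1})^{-1}\alpha_{0}(\alpha_{0}\alpha_{1})$ driving the exponent shift, is precisely the intended (and correct) filling-in of that induction.
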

\begin{proof}
The result follows by induction on $j$.
\end{proof}

\begin{lemma}[c.f. Clark-Saito \cite{ClarkSaito}]
\label{lem_correction_ClarkSaito_lem6.3}
A longitude $\mathfrak{l}\in\pi_1(S^3\setminus K)$ has the following presentation:
\[
\mathfrak{l}=\alpha_{0}^{-2n}(\alpha_{0}\alpha_{1})^n
\]
\end{lemma}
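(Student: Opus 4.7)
The plan is to read the longitude $\mathfrak{l}$ directly off the standard diagram $D$: first compute the blackboard longitude $\mathfrak{l}_{\mathrm{bb}}$ as the Wirtinger word traced out by a parallel pushoff of $K$ in the plane of $D$, then correct by the writhe. Under this approach, the formulas in Lemma \ref{lem_presentation_of_arcs_wrt_Wirtinger_presentation} do all of the algebraic work.

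First I would identify the traversal order along $K$. Because the relation $\alpha_{i+2}=\alpha_{i+1}^{-1}\alpha_{i}\alpha_{i+1}$ comes from the crossing where the underarc $\alpha_i$ passes beneath the overarc $\alpha_{i+1}$ and emerges as $\alpha_{i+2}$, the oriented knot visits arcs in the order $\alpha_0,\alpha_2,\alpha_4,\ldots$ (indices mod $n$) and meets the overarcs $\alpha_1,\alpha_3,\ldots,\alpha_{2n-1}$ in succession. Since $n$ is odd, each arc is hit once. All crossings are positive in the standard $(2,n)$-torus diagram, so each overarc contributes with exponent $+1$ and $\mathfrak{l}_{\mathrm{bb}}=\alpha_1\alpha_3\cdots\alpha_{2n-1}$.

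Second I would simplify this product via Lemma \ref{lem_presentation_of_arcs_wrt_Wirtinger_presentation}. Setting $w:=\alpha_0\alpha_1$ and writing $\alpha_{2j+1}=w^{-j}\alpha_1 w^{j}$ (the identity is valid for every $j\ge 0$ by the same induction that proves the lemma), consecutive factors telescope: the tail $w^{j}$ of one factor combines with the head $w^{-(j+1)}$ of the next into a single $w^{-1}$, leaving $\mathfrak{l}_{\mathrm{bb}}=\alpha_1(w^{-1}\alpha_1)^{n-1}w^{n-1}$. Using $\alpha_1w^{-1}=\alpha_0^{-1}$ and $\alpha_1w^{n-1}=\alpha_0^{-1}w^{n}$, this collapses to $\mathfrak{l}_{\mathrm{bb}}=\alpha_0^{-n}(\alpha_0\alpha_1)^{n}$. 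Finally, the standard diagram has writhe $+n$, so the preferred longitude is $\mathfrak{l}=\mathfrak{m}^{-n}\mathfrak{l}_{\mathrm{bb}}$; substituting $\mathfrak{m}=\alpha_0$ yields $\mathfrak{l}=\alpha_0^{-n}\cdot\alpha_0^{-n}(\alpha_0\alpha_1)^{n}=\alpha_0^{-2n}(\alpha_0\alpha_1)^{n}$.

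The main obstacle I expect is the bookkeeping in step two: extending Lemma \ref{lem_presentation_of_arcs_wrt_Wirtinger_presentation} to indices $j$ larger than $(n-1)/2$, and then correctly folding the $n-1$ intermediate factors of $w^{-1}\alpha_1$ together with the trailing $w^{n-1}$ into the clean expression $\alpha_0^{-n}(\alpha_0\alpha_1)^{n}$. A secondary point to watch is the side of the writhe correction, since $\mathfrak{m}^{-n}$ on the left (which is what I use) avoids appealing to centrality of $(\alpha_0\alpha_1)^{n}$; placing it on the right would give the same answer but only after commuting $\alpha_0$ past the central element $(\alpha_0\alpha_1)^{n}$.
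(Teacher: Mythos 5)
Your proposal is correct and follows essentially the same route as the paper: the Burde--Zieschang formula \cite[3.13 Remark]{BurdeZieschang1985} that the paper cites, $\mathfrak{l}=(\alpha_1\alpha_3\cdots\alpha_{2k-1})(\alpha_0\alpha_2\cdots\alpha_{2k})\alpha_0^{-n}$, is exactly your overarc word $\alpha_1\alpha_3\cdots\alpha_{2n-1}$ (indices mod $n$) with the writhe correction placed on the right, and your telescoping via Lemma \ref{lem_presentation_of_arcs_wrt_Wirtinger_presentation} is the same algebra. The only differences are organizational --- you telescope the full cyclic word in one pass (extending the lemma to all $j\ge 0$) and put $\mathfrak{m}^{-n}$ on the left, while the paper splits the word into two blocks and moves $\alpha_0^{-n}$ into place at the end using the commutativity of meridian and longitude, which is also the fact underlying the equivalence of your left placement with the cited right-handed formula.
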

\begin{proof}
Suppose $n=2k+1$. By \cite[3.13 Remark.]{BurdeZieschang1985},
\[
\mathfrak{l}
=
(\alpha_{1}\alpha_{3}\cdots\alpha_{2k-1})(\alpha_{0}\alpha_{2}\cdots\alpha_{2k})\alpha_{0}^{-n}.
\]
By Lemma \ref{lem_presentation_of_arcs_wrt_Wirtinger_presentation},
\begin{eqnarray*}
\alpha_{1}\alpha_{3}\cdots\alpha_{2k-1}
&=&
\alpha_{0}^{-k}(\alpha_{0}\alpha_{1})^{k},\\
\alpha_{0}\alpha_{2}\cdots\alpha_{2k}
&=&
\alpha_{0}\alpha_{1}^{-k}(\alpha_{0}\alpha_{1})^{k}.
\end{eqnarray*}
Therefore, by Lemma \ref{lem_presentation_of_arcs_wrt_Wirtinger_presentation} and $\alpha_{0}=\alpha_{n}=\alpha_{2k+1}$,
\[
\mathfrak{l}
=\alpha_{0}^{-k}(\alpha_{0}\alpha_{1})^{k+1}\alpha_{1}^{-k-1}(\alpha_{0}\alpha_{1})^{k}\alpha_{0}^{-n}
=\alpha_{0}^{-k}(\alpha_{0}\alpha_{1})^{n}\alpha_{0}^{-n-k-1}.
\]
The result follows since meridians and longitudes commute (see \cite[3.14 Definition and Proposition]{BurdeZieschang1985}).
\end{proof}

\begin{rem}
\label{rem_ClarkSaito_Lem6.3}
The statement of \cite[Lemma 6.3]{ClarkSaito} is true. However, the proof of \cite[Lemma 6.3]{ClarkSaito} contains a fatal typographical error: Clark and Saito presented a image of a longitude of $(2,n)$-torus knots as 
\begin{equation*}
\label{typo_ClarkSaito}
\mathcal{L}(C)
=
q_{0}^{-n}(q_{1}q_{3}\cdots q_{2k-3})(q_{0}q_{2}\cdots q_{2k})
\end{equation*}
to prove \cite[Lemma 6.3]{ClarkSaito}, but the correct presentation is 
\[
\mathcal{L}(C)
=
q_{0}^{-n}(q_{1}q_{3}\cdots q_{2k-1})(q_{0}q_{2}\cdots q_{2k}).
\]
\end{rem}

\section{The solutions of a equation $\lambda^m-\mu^m=\lambda^{m+1}-\mu^{m+1}$}
Suppose $m$ is a positive integer. We see properties of $\lambda,\mu\in\mathbb{C}$ satisfying following conditions:
$
\lambda\mu=1
$, {}
$
\lambda\neq\mu
$, and {}
$
\lambda^m-\mu^m=\lambda^{m+1}-\mu^{m+1}
$.
\begin{lemma}
\label{lem_presentation_of_lambda_as_exp}
\[
\lambda\in
\left\{
\operatorname{exp}\frac{\pi j\sqrt{-1}}{2m+1}
\ :\ {}
j=1,3,\cdots,2m-1,2m+3,2m+5,\cdots,4m+1
\right\}.
\]
\end{lemma}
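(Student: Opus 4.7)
The plan is to convert the hypothesis into a polynomial equation in $\lambda$ alone and then factor it explicitly.

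Since $\lambda\mu=1$ gives $\mu=\lambda^{-1}$, and in particular $\lambda\neq 0$, I would substitute $\mu=\lambda^{-1}$ into $\lambda^m-\mu^m=\lambda^{m+1}-\mu^{m+1}$ and clear denominators by multiplying through by $\lambda^{m+1}$, obtaining the polynomial equation
\[
\lambda^{2m+2}-\lambda^{2m+1}+\lambda-1=0.
\]
The key observation is that this polynomial factors cleanly: grouping the terms as $\lambda^{2m+1}(\lambda-1)+(\lambda-1)$ gives
\[
(\lambda-1)(\lambda^{2m+1}+1)=0.
\]

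The root $\lambda=1$ forces $\mu=1=\lambda$, which violates the hypothesis $\lambda\neq\mu$, so I would discard it. The remaining condition $\lambda^{2m+1}=-1$ has exactly the $2m+1$ solutions $\lambda=\exp(\pi j\sqrt{-1}/(2m+1))$ for odd $j\in\{1,3,\ldots,4m+1\}$. Among these, the only one with $\lambda^2=1$ (equivalently $\lambda=\lambda^{-1}=\mu$) is $\lambda=-1$, corresponding to $j=2m+1$. Removing this value yields precisely the index set $\{1,3,\ldots,2m-1,2m+3,\ldots,4m+1\}$ asserted in the lemma.

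I anticipate no serious obstacle: the argument reduces entirely to a degree-$(2m+2)$ polynomial identity whose factorization is transparent by direct inspection. The only point requiring a little care is the bookkeeping at the end, namely confirming that the hypothesis $\lambda\neq\mu$ excludes exactly the two values $\lambda=\pm 1$ and that $\lambda=-1$ among the solutions of $z^{2m+1}=-1$ is the index $j=2m+1$, so that the resulting list of admissible $j$ matches the stated set.
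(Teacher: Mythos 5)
Your proof is correct and takes essentially the same route as the paper's: both reduce the hypothesis to $\lambda^{2m+1}=-1$ and then use $\lambda\neq\mu$ to discard $\lambda=-1$, i.e., the index $j=2m+1$, leaving exactly the stated set. The only difference is cosmetic: the paper divides by $\lambda-\mu$ and sums the resulting alternating geometric series to get $\lambda^{-m}(\lambda^{2m+1}+1)/(\lambda+1)$, whereas you clear denominators and factor $(\lambda-1)(\lambda^{2m+1}+1)$ directly, which neatly sidesteps the implicit divisions by $\lambda-\mu$ and $\lambda+1$.
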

\begin{proof}
Since $\lambda$ and $\mu$ satisfy $\lambda\mu=1$,
\begin{eqnarray*}
0&=&
\lambda^m-\mu^m-\lambda^{m+1}+\mu^{m+1}\\
&=&
\sum_{i=0}^{m}\lambda^{2i-m}
-
\sum_{i=1}^{m}\lambda^{2i-m-1}\\
&=&
\frac{\lambda^{-m}(\lambda^{2m+1}+1)}{\lambda+1}
.
\end{eqnarray*}
Considering $(\lambda,\mu)\neq(\pm1,\pm1)$, the result follows.
\end{proof}
We get the following lemma in light of Lemma \ref{lem_presentation_of_lambda_as_exp}.
\begin{lemma}
\label{lem_presentation_lambda+mu_as_cos}
\[
\lambda+\mu\in
\left\{
2\cos\frac{\pi j}{2m+1}
\ :\ {}
j=1,3,\cdots,2m-1
\right\}.
\]
\end{lemma}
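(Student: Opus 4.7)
The plan is to substitute the explicit description of $\lambda$ provided by Lemma \ref{lem_presentation_of_lambda_as_exp} directly into the sum $\lambda+\mu$ and read off the cosine values. Since every admissible $\lambda$ has the form $\exp(\pi j\sqrt{-1}/(2m+1))$, it lies on the unit circle, and the constraint $\lambda\mu=1$ forces $\mu=\overline{\lambda}=\exp(-\pi j\sqrt{-1}/(2m+1))$. Euler's formula then yields
\[
\lambda+\mu=2\cos\frac{\pi j}{2m+1}.
\]

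It remains to verify that although the permitted index $j$ ranges over the larger set $\{1,3,\ldots,2m-1\}\cup\{2m+3,2m+5,\ldots,4m+1\}$, the associated cosines lie in the smaller target set $\{2\cos(\pi j/(2m+1)):j=1,3,\ldots,2m-1\}$. Values of $j$ in the first half need no adjustment. For $j$ in the second half, I would introduce the reindexing $j':=4m+2-j$; since $4m+2$ is even, $j'$ is odd, and as $j$ runs through $\{2m+3,2m+5,\ldots,4m+1\}$ the image $j'$ runs through $\{1,3,\ldots,2m-1\}$. The evenness and $2\pi$-periodicity of cosine then give
\[
\cos\frac{\pi j}{2m+1}=\cos\!\left(2\pi-\frac{\pi j'}{2m+1}\right)=\cos\frac{\pi j'}{2m+1},
\]
placing the value in the prescribed set.

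The argument is essentially bookkeeping rather than a substantive proof, and I do not expect any real obstacle. The only thing worth checking carefully is that the reindexing $j\mapsto 4m+2-j$ preserves odd parity and exactly interchanges the two halves of the index set, but both properties are immediate from $4m+2$ being even.
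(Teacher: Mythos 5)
Your proposal is correct and follows essentially the same route as the paper, which derives the lemma directly from Lemma \ref{lem_presentation_of_lambda_as_exp} without writing out the details. You simply make explicit the bookkeeping the paper leaves implicit: $\mu=\lambda^{-1}=\overline{\lambda}$ gives $\lambda+\mu=2\cos\frac{\pi j}{2m+1}$, and the reindexing $j\mapsto 4m+2-j$ folds the second half of the index set onto $\{1,3,\ldots,2m-1\}$, both of which check out.
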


\section{The equation \ref{equation_determine_coloring}}
\label{section_equation_determine_coloring}
Suppose $k$ is a positive integer, $r>0$ and
\[
x
=
D(r)
=
\begin{pmatrix}
e^{r} & 0\\
0 & e^{-r}
\end{pmatrix}
,\ {}
y
=
\begin{pmatrix}
a & b\\
c & d
\end{pmatrix}.
\]
We determine the real numbers $a$, $b$, $c$, and $d$ satisfying following conditions: $a+d=2\cosh{r}$, $ad-bc-1$, and the equation \ref{equation_determine_coloring}, that is,
\[
(xy)^{k}x=y(xy)^{k}.
\]
Let $\lambda$, $\mu\in\mathbb{C}$ be the two solutions of the characteristic equation of $y$
\[
t^2-(ae^{r}+de^{-r})t+1=0.
\]

\begin{lemma}
\label{lem_to_the_power_of_xy}
For a positive integer $m$,
\[
(xy)^m
=
\left\{
\begin{array}{ccl}
\begin{pmatrix}
a^me^{rm} & 0\\
ce^{-r}\sum_{i=0}^{m-1}(ae^{r})^i(de^{-r})^{m-i-1} & d^me^{-rm}
\end{pmatrix}
    &\mbox{if}& b=0,  \\
\begin{pmatrix}
a^{m}e^{rm} & be^r\sum_{i=0}^{m-1}(ae^r)^i(de^{-r})^{m-i-1}\\
0 & d^me^{-rm}
\end{pmatrix}
     &\mbox{if}& c=0, \\
M_1(m)
&\mbox{if}& b\neq0\mbox{ and }c\neq0\mbox{ and }\lambda\neq\mu,\\
M_2(m)
&\mbox{if}& b\neq0\mbox{ and }c\neq0\mbox{ and }\lambda=\mu,
\end{array}
\right.
\]
where $M_1(m)$ is a matrix
\[
\frac{1}{\lambda-\mu}
\begin{pmatrix}
-\lambda^{m-1}+\mu^{m-1}+ae^r(\lambda^m-\mu^m) & be^r(\lambda^m-\mu^m)\\
ce^{-r}(\lambda^m-\mu^m) & \lambda^{m+1}-\mu^{m+1}-ae^r(\lambda^m-\mu^m)
\end{pmatrix}
\]
and $M_2(m)$ is a matrix
\[
\begin{pmatrix}
\left(-(m+1)\lambda+ame^r\right)\lambda^{m-1} & be^{r}m\lambda^{m-1}\\
ce^{-r}m\lambda^{m-1} & -\lambda^{m-1}\left((m-1)\lambda-ame^r\right)
\end{pmatrix}.
\]
\end{lemma}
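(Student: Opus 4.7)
The plan is to begin by correctly identifying $\lambda,\mu$. Since $\det(xy)=\det(x)\det(y)=1$ and $\operatorname{trace}(xy)=ae^{r}+de^{-r}$, the quadratic $t^{2}-(ae^{r}+de^{-r})t+1$ is in fact the characteristic polynomial of $xy$ (not merely of $y$), so $\lambda,\mu$ are the eigenvalues of $xy$, with $\lambda\mu=1$ and $\lambda+\mu=ae^{r}+de^{-r}$. These two identities are the only tools I need beyond the Cayley--Hamilton theorem. I will split the four cases into two groups: (A) the triangular cases $b=0$ or $c=0$, treated by direct induction on $m$; (B) the cases $b,c\neq 0$, treated via Cayley--Hamilton.

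For (A), the matrix $xy$ is lower (resp.\ upper) triangular, so the $(1,1)$ and $(2,2)$ entries of $(xy)^{m}$ are immediately $(ae^{r})^{m}$ and $(de^{-r})^{m}$. The single off-diagonal entry, a geometric sum, is straightforward to verify by induction on $m$: multiplying the claimed inductive expression on the left by $xy$ reproduces the same sum with $m$ replaced by $m+1$ after a single index shift. Nothing subtle occurs.

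For (B), Cayley--Hamilton gives $(xy)^{2}=(\lambda+\mu)(xy)-I$, from which an easy induction shows $(xy)^{m}=p_{m}(xy)+q_{m}I$ for scalars $p_{m},q_{m}$ depending only on $\lambda,\mu,m$. In Case 3 ($\lambda\neq\mu$) I will compute these coefficients by evaluating the polynomial identity $t^{m}\equiv p_{m}t+q_{m}\pmod{t^{2}-(\lambda+\mu)t+1}$ at $t=\lambda$ and $t=\mu$, yielding $p_{m}=(\lambda^{m}-\mu^{m})/(\lambda-\mu)$ and, after using $\lambda\mu=1$, $q_{m}=-(\lambda^{m-1}-\mu^{m-1})/(\lambda-\mu)$. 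In Case 4 ($\lambda=\mu$) I will supplement evaluation at the double root with a derivative condition there, obtaining $p_{m}=m\lambda^{m-1}$ and $q_{m}=-(m-1)\lambda^{m}$. Substituting into $p_{m}(xy)+q_{m}I$ immediately reproduces the $(1,1)$, $(1,2)$, and $(2,1)$ entries of $M_{1}(m)$ and $M_{2}(m)$ as written.

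The main obstacle is purely computational: matching the $(2,2)$ entry, which the lemma expresses in terms of $a$ rather than $d$. To handle this I will use $de^{-r}=(\lambda+\mu)-ae^{r}$ together with the identity $(\lambda+\mu)(\lambda^{m}-\mu^{m})=(\lambda^{m+1}-\mu^{m+1})+(\lambda^{m-1}-\mu^{m-1})$ (which in turn uses $\lambda\mu=1$) in Case 3, and the specialization $de^{-r}=2\lambda-ae^{r}$ in Case 4. These substitutions convert the $d$-expression that naturally appears into the $a$-expression claimed, completing the proof. Apart from this bookkeeping, the argument is routine linear algebra.
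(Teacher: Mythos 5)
Your method is sound, and it takes a genuinely different route from the paper. Where you invoke Cayley--Hamilton and compute the remainder of $t^m$ modulo $t^2-(\lambda+\mu)t+1$ (with a derivative condition at the double root in the fourth case), the paper instead explicitly diagonalizes, writing $xy=P\operatorname{diag}(\lambda,\mu)P^{-1}$ with $P=\left(\begin{smallmatrix} be^r & be^r\\ \lambda-ae^{r} & \mu-ae^{r}\end{smallmatrix}\right)$ when $\lambda\neq\mu$, and uses the Jordan normal form $xy=P'\left(\begin{smallmatrix}\lambda & 1\\ 0 & \lambda\end{smallmatrix}\right)P'^{-1}$ when $\lambda=\mu$, then computes $PJ^mP^{-1}$ directly; the triangular cases are done by induction on $m$ in both treatments. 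Your route is leaner: it needs only the two scalar identities $\lambda+\mu=ae^{r}+de^{-r}$ and $\lambda\mu=1$, requires no inversion of conjugating matrices, and handles all four entries at once. Your opening observation --- that $t^{2}-(ae^{r}+de^{-r})t+1$ is the characteristic polynomial of $xy$, not of $y$ --- is correct and silently repairs a slip in the paper's wording.

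There is, however, one false assertion in your write-up: substituting $p_m=m\lambda^{m-1}$, $q_m=-(m-1)\lambda^{m}$ into $p_m(xy)+q_mI$ does \emph{not} reproduce the entries of $M_2(m)$ ``as written'', nor does the substitution $de^{-r}=2\lambda-ae^{r}$ produce the printed $(2,2)$ entry. Your (correct) computation yields diagonal entries $\lambda^{m-1}\left(ame^{r}-(m-1)\lambda\right)$ and $\lambda^{m-1}\left((m+1)\lambda-ame^{r}\right)$, whereas the lemma prints $\lambda^{m-1}\left(ame^{r}-(m+1)\lambda\right)$ and $\lambda^{m-1}\left(ame^{r}-(m-1)\lambda\right)$. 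The fault lies in the printed statement, not in your algebra: the printed $M_2(m)$ has trace $2m\lambda^{m-1}(ae^{r}-\lambda)$, while necessarily $\operatorname{trace}(xy)^m=\lambda^m+\mu^m=2\lambda^m$, and already at $m=1$ the printed $(1,1)$ entry reads $ae^{r}-2\lambda\neq ae^{r}$. Your version also agrees with the $\mu\to\lambda$ limit of $M_1(m)$ (using $\lambda^2=\lambda\mu=1$), so it is the correct form of $M_2(m)$; since $M_2(m)$ is only used in Lemma \ref{lem_bneq0_cneq0_l=m} to rule out the case $\lambda=\mu$, the typo does not propagate to the paper's main results. To make your proof complete you should carry out this substitution honestly and flag the discrepancy with the stated lemma, rather than claim an immediate match that your own coefficients contradict.
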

\begin{proof}
\noindent(In the case $b=0$ or $c=0$) The result follows by the induction on $m$.

\noindent(In the case $b\neq0$ and $c\neq0$ and $\lambda\neq\mu$) The matrix $xy$ is diagonalizable as follows:
\[
xy=
\begin{pmatrix}
be^r & be^r\\
\lambda-ae^r & \mu -ae^r
\end{pmatrix}
\begin{pmatrix}
\lambda & 0\\
0 & \mu
\end{pmatrix}
\begin{pmatrix}
be^r & be^r\\
\lambda-ae^r & \mu -ae^r
\end{pmatrix}^{-1}.
\]
Therefore, the result follows by direct computation.

\noindent(In the case $b\neq0$ and $c\neq0$ and $\lambda\neq\mu$) The matrix $xy$ has the Jordan normal form
\[
xy=
\begin{pmatrix}
be^r & 0\\
\lambda-ae^r & 1
\end{pmatrix}
\begin{pmatrix}
\lambda & 1\\
0 &\lambda
\end{pmatrix}
\begin{pmatrix}
be^r & 0\\
\lambda-ae^r & 1
\end{pmatrix}^{-1}.
\]
Thus the result follows by direct computation.
\end{proof}

The following lemmas are derived from Lemma \ref{lem_to_the_power_of_xy}.
\begin{lemma}
\label{lem_case_b=0_or_c=0}
$b=0$ or $c=0$ if and only if $(a,b,c,d)=(e^r,0,0,e^{-r})$.
\end{lemma}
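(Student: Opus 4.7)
The reverse implication is immediate: if $(a,b,c,d)=(e^r,0,0,e^{-r})$, then $y=x$, the trace and determinant conditions are satisfied trivially, and $(xy)^k x = x^{2k+1} = y(xy)^k$, with $b=c=0$. For the forward direction I would treat the case $b=0$ in detail; the case $c=0$ is handled by the entirely parallel computation with the upper-triangular branch of Lemma \ref{lem_to_the_power_of_xy} in place of the lower-triangular one.

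Assume $b=0$. Then $ad-bc=1$ collapses to $ad=1$, which combined with $a+d=2\cosh r$ forces $\{a,d\}=\{e^r,e^{-r}\}$. Plugging the $b=0$ formula of Lemma \ref{lem_to_the_power_of_xy} into the equation $(xy)^k x = y(xy)^k$ and comparing the diagonal entries yields
\[
a^k e^{r(k+1)} = a^{k+1} e^{rk},\qquad d^k e^{-r(k+1)} = d^{k+1} e^{-rk},
\]
pinning down $a=e^r$ and $d=e^{-r}$. Comparing the $(2,1)$ entries and writing $S_k=\sum_{i=0}^{k-1}(ae^r)^i(de^{-r})^{k-i-1}$, the equation reduces to the scalar identity
\[
c\bigl[S_k(1-de^{-r})-a^k e^{rk}\bigr]=0.
\]

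With $a=e^r$ and $d=e^{-r}$ substituted, the sum $S_k$ is geometric with ratio $e^{4r}$ and closes to $\sinh(2rk)/\sinh(2r)$. A short simplification rewrites the bracket as $-(e^{-2rk}+e^{2r(k+1)})/(e^{2r}+1)$, which is strictly negative for $r>0$, so $c=0$ and the required conclusion $(a,b,c,d)=(e^r,0,0,e^{-r})$ follows. The only step involving any genuine computation is the closed-form evaluation of $S_k$ and the sign check on the resulting bracket; since every term there has the form $e^{\pm 2rj}$ with definite sign for $r>0$, no real obstacle arises, and the argument reduces to the observation that a sum of strictly positive quantities cannot vanish.
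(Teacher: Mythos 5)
Your proof is correct and follows exactly the route the paper intends: the paper's proof of Lemma \ref{lem_case_b=0_or_c=0} is just the remark that it is ``derived from Lemma \ref{lem_to_the_power_of_xy},'' and you carry out precisely that derivation, substituting the triangular-case formula for $(xy)^k$ into equation (\ref{equation_determine_coloring}), pinning down $a=e^r$, $d=e^{-r}$ from the diagonal entries, and killing $c$ (resp.\ $b$) via the sign of the bracket $-\bigl(e^{-2rk}+e^{2r(k+1)}\bigr)/(e^{2r}+1)<0$. Your closed form $S_k=\sinh(2rk)/\sinh(2r)$ and the sign check are both accurate, so you have in effect supplied the details the paper omits.
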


\begin{lemma}
\label{lem_bneq0_cneq0_lneqm}
$b\neq0$ and $c\neq0$ and $\lambda\neq\mu$ if and only if 
$
\lambda^k-\mu^k=\lambda^{k+1}-\mu^{k+1}
$.
\end{lemma}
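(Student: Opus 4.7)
The strategy is to substitute the explicit expression for $(xy)^k$ from Lemma \ref{lem_to_the_power_of_xy} (third case) into the matrix equation $(xy)^k x = y(xy)^k$ and reduce it entrywise to the scalar relation $A_k = A_{k+1}$, where I write $A_m := \lambda^m - \mu^m$. Throughout, the recurrence $A_{m+1} = (\lambda+\mu)A_m - A_{m-1}$ (coming from $\lambda\mu = 1$), together with $\lambda + \mu = ae^r + de^{-r}$, $a + d = 2\cosh r$, and $bc = ad - 1$, will be the main tools.

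For the forward direction, assume $b \neq 0$, $c \neq 0$, $\lambda \neq \mu$, and that equation (\ref{equation_determine_coloring}) holds. Multiplying the formula for $(xy)^k$ on the right by $x = \mathrm{diag}(e^r, e^{-r})$ and on the left by $y$ and comparing the $(1,2)$ entries yields, after clearing the common factor $1/(\lambda - \mu)$, the identity $b A_k = b A_{k+1}$, which forces $A_k = A_{k+1}$ since $b \neq 0$. The $(2,1)$ comparison yields the same conclusion by using $c \neq 0$ and the recurrence $A_{k+1} = (\lambda+\mu)A_k - A_{k-1}$ to simplify the entry coming from $y(xy)^k$.

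For the backward direction, suppose equation (\ref{equation_determine_coloring}) holds and $A_k = A_{k+1}$; one must show $b \neq 0$, $c \neq 0$, $\lambda \neq \mu$. If $b = 0$ or $c = 0$, Lemma \ref{lem_case_b=0_or_c=0} forces $(a,b,c,d) = (e^r, 0, 0, e^{-r})$, whence $\lambda = e^{2r}$, $\mu = e^{-2r}$, and a direct estimate shows $A_{k+1} > A_k$ for $r > 0$, a contradiction. If instead $\lambda = \mu$ with $b, c$ nonzero, Lemma \ref{lem_bneq0_cneq0_l=m} excludes any solution of (\ref{equation_determine_coloring}). To close the loop, one also checks that $A_k = A_{k+1}$ together with the setup constraints implies (\ref{equation_determine_coloring}): using $A_{k-1} = (\lambda + \mu - 1)A_k$ from the recurrence, the diagonal entries of $(xy)^k x$ and $y(xy)^k$ both reduce to the common value $(e^r - d)A_k/(\lambda - \mu) = (a - e^{-r})A_k/(\lambda - \mu)$ via $e^r(\lambda+\mu) = ae^{2r} + d$ and the trace condition $a + d = 2\cosh r$, while the off-diagonal matching is the computation already carried out.

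The main obstacle is purely computational: executing the $2 \times 2$ products carefully and tracking the cancellations. Every reduction appeals to one of $\lambda\mu = 1$, $bc = ad - 1$, or $a + d = 2\cosh r$, applied together with the linear recurrence on $(A_m)$; no conceptual subtlety arises beyond this bookkeeping.
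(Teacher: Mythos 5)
Your proposal is correct and takes essentially the same route the paper intends: the paper gives no written proof beyond ``derived from Lemma \ref{lem_to_the_power_of_xy},'' and your argument is exactly that derivation carried out in full --- the $(1,2)$-entry comparison of $(xy)^k x$ with $y(xy)^k$ yielding $b(\lambda^k-\mu^k)=b(\lambda^{k+1}-\mu^{k+1})$, the exclusion of the degenerate cases via Lemmas \ref{lem_case_b=0_or_c=0} and \ref{lem_bneq0_cneq0_l=m}, and the check that the diagonal entries agree automatically from $a+d=2\cosh r$ and $ad-bc=1$. All the identities you quote verify ($\lambda^{k-1}-\mu^{k-1}=(\lambda+\mu-1)(\lambda^k-\mu^k)$ under the hypothesis, and the common diagonal value $(a-e^{-r})(\lambda^k-\mu^k)/(\lambda-\mu)$), and your loop-closing converse is in fact needed for the application in Section \ref{section_determine_S^2_1(r)-coloring}, so including it is a strength rather than an excess.
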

\begin{lemma}
\label{lem_bneq0_cneq0_l=m}
If $b\neq0$ and $c\neq0$ and $\lambda=\mu$, there is no $a$, $b$, $c$ or $d$ satisfying the conditions.
\end{lemma}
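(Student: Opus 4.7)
The plan is to derive a direct numerical contradiction from a single entry of the matrix equation $(xy)^k x = y(xy)^k$, using the explicit form of $(xy)^k$ provided by Lemma \ref{lem_to_the_power_of_xy}. First, I observe that the conditions $\lambda\mu = 1$ and $\lambda = \mu$ force $\lambda^2 = 1$, so $\lambda \in \{+1, -1\}$. Reading off the coefficient of $t$ in the characteristic equation gives $\lambda + \mu = ae^r + de^{-r}$, which in this case becomes $ae^r + de^{-r} = 2\lambda$.

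Next, I use $(xy)^k = M_2(k)$ and expand both sides of the coloring equation. Since $x$ is diagonal, both products $M_2(k)\cdot x$ and $y\cdot M_2(k)$ are routine $2{\times}2$ multiplications. I compare just the $(2,1)$ entries: the $(2,1)$ entry of $M_2(k)\cdot x$ is $\lambda^{k-1} c k e^{-r} e^r = \lambda^{k-1} c k$, while the $(2,1)$ entry of $y\cdot M_2(k)$ is
\[
\lambda^{k-1}\bigl[c(ake^r - (k+1)\lambda) + dcke^{-r}\bigr].
\]
This is the cleanest entry to use because it is linear in $c$ (which is nonzero by hypothesis) and $\lambda^{k-1} \neq 0$ since $\lambda = \pm 1$. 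Dividing through by $c\lambda^{k-1}$ and using $ae^r + de^{-r} = 2\lambda$ collapses the $a$ and $d$ terms at once, giving the scalar identity
\[
k = k(ae^r + de^{-r}) - (k+1)\lambda = 2k\lambda - (k+1)\lambda = (k-1)\lambda.
\]

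Finally, I check that $k = (k-1)\lambda$ has no solution with $k\geq 1$ a positive integer and $\lambda \in \{+1,-1\}$. If $k = 1$, the identity reads $1 = 0$, immediately absurd. If $k\geq 2$, it rearranges to $\lambda = k/(k-1)$, a rational number in the half-open interval $(1,2]$, which is neither $+1$ nor $-1$. Either way we reach a contradiction, so no $a, b, c, d$ can satisfy all the hypotheses simultaneously.

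The only subtle step is carrying out the $2{\times}2$ multiplications correctly and choosing the entry that gives a clean reduction; once the trace identity $ae^r + de^{-r} = 2\lambda$ is substituted, the resulting one-variable equation is elementary and the contradiction is immediate. I expect no serious obstacle beyond bookkeeping of signs in the entries of $M_2(k)$.
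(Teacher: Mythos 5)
Your proof is correct and follows exactly the route the paper intends: the paper gives no explicit proof of Lemma \ref{lem_bneq0_cneq0_l=m}, saying only that it is derived from Lemma \ref{lem_to_the_power_of_xy}, and your entrywise comparison of $(xy)^k x$ with $y(xy)^k$ via $M_2(k)$, combined with $\lambda=\mu$, $\lambda\mu=1 \Rightarrow \lambda=\pm1$ and the trace identity $ae^r+de^{-r}=2\lambda$, is precisely that derivation, carried out in full. One caveat you should be aware of: the diagonal entries of $M_2(m)$ as printed in the paper are misprinted. Writing $xy=\lambda I+N$ with $N^2=0$ gives $(xy)^m=m\lambda^{m-1}(xy)-(m-1)\lambda^m I$, so the correct $(1,1)$ entry is $\lambda^{m-1}\left(ame^r-(m-1)\lambda\right)$, not $\lambda^{m-1}\left(ame^r-(m+1)\lambda\right)$ (a quick sanity check: the printed matrix has trace $2\lambda^{m-1}(ame^r-m\lambda)$, which is not the required $2\lambda^m$ in general, and a numerical test with $x=\operatorname{diag}(2,\tfrac12)$, $y=\bigl(\begin{smallmatrix}1 & 1\\ -1 & 0\end{smallmatrix}\bigr)$, $m=2$ confirms the discrepancy). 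With the corrected entry, your scalar identity becomes $k=(k+1)\lambda$ rather than $k=(k-1)\lambda$. Fortunately this does not damage your argument: $k=(k+1)\lambda$ is equally impossible for a positive integer $k$ and $\lambda=\pm1$ (taking $\lambda=1$ gives $k=k+1$, and $\lambda=-1$ gives $2k=-1$), so your choice of the $(2,1)$ entry and the reduction via the trace identity yield the contradiction either way, and the lemma stands. Note also that you never used the constraint $a+d=2\cosh r$, so you have in fact proved the slightly stronger statement that equation (\ref{equation_determine_coloring}) alone rules out the case $b\neq0$, $c\neq0$, $\lambda=\mu$.
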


\section*{Acknowledgement}
The author is grateful to Professor Hiroyuki Ochiai, Kyushu University, for many valuable comments and discussions. He also thanks Michiko Yonemura, University of Miyazaki, for drawing Fig. \ref{pic_torus_knot_withcaption} for him.

This work was supported by JST SPRING, Grant Number JPMJSP2136.

\textit{Department of Mathematics, Kyushu University, 744 Motooka, Nishi-ku, Fukuoka 819–0395, Japan}
\end{document}